 \font \eightrm=cmr8
 \newcommand{\nc}{\newcommand}
\newtheorem{thm}{Theorem}
\newtheorem{cor}[thm]{Corollary}
\newtheorem{lem}[thm]{Lemma}
\newtheorem{prop}[thm]{Proposition}
\newtheorem{rmk}[thm]{Remark}
\def\arbrebbdec#1#2#3{\,{\scalebox{0.60}{ 
  \begin{picture}(48,48) (349,-255)
    \SetWidth{1}
    \SetColor{Black}
    \Vertex(375,-252){12}
    \Line(376,-250)(395,-215)
    \Line(373,-251)(354,-214)
    \Vertex(353,-213){9}
    \Vertex(395,-213){9}
    \SetColor{White}
    \Vertex(375,-252){11}
    \Vertex(353,-213){8}
    \Vertex(395,-213){8}
    \Text(371,-257)[lb]{\large{\Black{$#1$}}}
    \Text(390,-218)[lb]{\large{\Black{$#3$}}}
    \Text(348,-218)[lb]{\large{\Black{$#2$}}}
  \end{picture}
}}\ }
\nc{\ignore}[1]{{}}
\nc{\mrm}[1]{{\rm #1}}
\nc{\dirlim}{\displaystyle{\lim_{\longrightarrow}}\,}
\nc{\invlim}{\displaystyle{\lim_{\longleftarrow}}\,}
\nc{\vep}{\varepsilon} \nc{\ep}{\epsilon}
\nc{\sigmat}{\widetilde\sigma}
\nc{\mchar}{\mrm{Char}}
\nc{\Hom}{\mrm{Hom}}
\nc{\id}{\mrm{id}}
\nc{\remark}{\noindent{\bf{Remark:}}}
\nc{\remarks}{\noindent{\bf{Remarks:}}}
 \nc{\delete}[1]{}
 \nc{\grad}[1]{^{({#1})}}
 \nc{\fil}[1]{_{#1}}
\nc{\BA}{{\Bbb A}} \nc{\CC}{{\Bbb C}} \nc{\DD}{{\Bbb D}}
\nc{\EE}{{\Bbb E}} \nc{\FF}{{\Bbb F}} \nc{\GG}{{\Bbb G}}
\nc{\HH}{{\Bbb H}} \nc{\LL}{{\Bbb L}} \nc{\NN}{{\Bbb N}}
\nc{\PP}{{\Bbb P}} \nc{\QQ}{{\Bbb Q}} \nc{\RR}{{\Bbb R}}
\nc{\TT}{{\Bbb T}} \nc{\VV}{{\Bbb V}} \nc{\ZZ}{{\Bbb Z}}
\nc{\Cal}[1]{{\mathcal {#1}}}
\nc{\mop}[1]{\mathop{\hbox {\rm #1} }}
\nc{\smop}[1]{\mathop{\hbox {\eightrm #1} }}
\nc{\mopl}[1]{\mathop{\hbox {\rm #1} }\limits}
\nc{\frakg}{{\mathfrak g}}
\nc{\g}[1]{{\mathfrak {#1}}}
\def \restr#1{\mathstrut_{\textstyle |}\raise-8pt\hbox{$\scriptstyle #1$}}
\def \srestr#1{\mathstrut_{\scriptstyle |}\hbox to
  -1.5pt{}\raise-4pt\hbox{$\scriptscriptstyle #1$}}
\nc{\wt}{\widetilde}
\nc{\wh}{\widehat}
\nc{\un}{\hbox{\bf 1}}
\nc{\redtext}[1]{\textcolor{red}{\tt #1}}
\nc{\bluetext}[1]{\textcolor{blue}{#1}}
\nc{\comment}[1]{[[{\tt {#1}}]] }
\nc{\R}{{\mathbb R}}
\nc\fleche[1]{\mathop{\hbox to #1 mm{\rightarrowfill}}\limits}
\def\semi{\mathrel{\times}\kern -.85pt\joinrel\mathrel{\raise 1.4pt\hbox{${\scriptscriptstyle |}$}}}
\def\qshu{\!\joinrel{\!\scriptstyle\amalg\hskip -3.1pt\amalg}\,\hskip -8.2pt\hbox{-}\hskip 4pt}
\def\sqshu{\joinrel{\scriptscriptstyle\amalg\hskip -2.5pt\amalg}\,\hskip -7pt\hbox{-}\hskip 4pt}
\def\cto{\joinrel{\hbox{$\nearrow$}\hskip -3.65mm\raise 2pt\hbox{$\nearrow$}}}
\def\scto{\joinrel{\scalebox {0.8}{\hbox{$\nearrow$}}\hskip -2.9mm{\scalebox{0.8}{\raise 2pt\hbox{$\nearrow$}}}}}
\def\sctos{\joinrel{\scalebox {0.5}{\hbox{$\nearrow$}}\hskip -1.9mm{\scalebox{0.5}{\raise 2pt\hbox{$\nearrow$}}}}}
\def\ccto{\joinrel{\!\uparrow\hskip -3.3mm\raise 1mm\hbox{$\uparrow$}\,}}
\def\sccto{\joinrel{\,\scriptstyle\uparrow\hskip -1.5mm\raise 0.7mm\hbox{$\scriptstyle\uparrow$}}}
\def\diagramme #1{\vskip 4mm \centerline {#1} \vskip 4mm}
\begin{document}


\title[Word series substitution and mould composition]
      {A comodule-bialgebra structure for word-series substitution and mould composition}

\author{Kurusch Ebrahimi-Fard}
\address{Department of Mathematical Sciences, Norwegian University of Science and Technology (NTNU), 7491 Trondheim, Norway. {\tiny{(on leave from UHA, Mulhouse, France)}}}
\email{kurusch.ebrahimi-fard@ntnu.no }\urladdr{https://folk.ntnu.no/kurusche/ }

\author{Fr\'ed\'eric Fauvet}
\address{IRMA, Univ.~of Strasbourg et CNRS, 7 rue Descartes, 67084 Strasbourg Cedex, France} \email{frederic.fauvet@math.unistra.fr} 

\author{Dominique Manchon}
\address{Univ.~Blaise Pascal, CNRS-UMR 6620, 3 place Vasar\'ely, CS60026, 63178 Aubi\`ere, France}    
\email{manchon@math.univ-bpclermont.fr}


\begin{abstract} 
An internal coproduct is described, which is compatible with Hoffman's quasi-shuffle product. Hoffman's quasi-shuffle Hopf algebra, with deconcatenation coproduct, is a comodule-Hopf algebra over the bialgebra thus defined. The relation with Ecalle's mould calculus, i.e., mould composition and contracting arborification is precised. 
\end{abstract}


\maketitle
\noindent{\footnotesize{\textbf{Keywords:} Arborification; Bialgebra; $B$- and $S$-series; Comodule-Hopf algebra; Hopf algebra; Mould calculus; Quasi-shuffle product; Rooted trees; Surjections; Weak quasi-shuffle; Word series.}}

\smallskip
\noindent{\footnotesize{\textbf{MSC Classification:} 16T05, 16T10, 16T15, 16T30.}}\\

\section{Introduction}
\label{sect:intro}

%
%

\noindent A \textsl{word series} \cite{MS} is a formal linear combination, usually infinite:
\begin{equation}
\label{ws}
	\sum_{\bm\omega\in\Omega^*}M^{\bm\omega}C_{\bm\omega},
\end{equation}
where $\Omega$ is a set, called alphabet, and $\Omega^*$ is the free (associative) monoid of words $\bm\omega$ generated by the letters from $\Omega$. The map $\bm\omega\mapsto C_{\bm\omega}$ is a monoid morphism from $\Omega^*$ into a unital associative algebra $\Cal D$ over some base field $\bm k$. We have to assume that $\Cal D$ is endowed with a topology such that the infinite sum \eqref{ws} is convergent. The coefficients $M^{\bm\omega}$ belong to the base field $\bm k$ or to some unital commutative algebra $\Cal A$ such that $\Cal D$ is an $\Cal A$-algebra. The collection $(M^{\bm\omega})_{\bm\omega\in\Omega^*}$ is called a \textsl{mould} in \cite{E1}, whereas the collection $(C_{\bm\omega})_{\bm\omega\in\Omega^*}$ is called a \textsl{comould}, and the word series \eqref{ws} is the \textsl{mould-comould contraction}\footnote{Note that the comould $C$ is chosen to be a monoid antimorphism in \cite{E1}.}.

\medskip

We will stick to the case where the comould $C$ is tautological, namely $C_{\bm\omega}=\bm\omega$. This makes sense with $\Cal D=\bm k\langle\!\langle\Omega\rangle\!\rangle$ being the algebra of noncommutative power series with variables in $\Omega$. The \textsl{mould calculus} has been developed by J.~Ecalle in \cite{E1}, as a powerful tool in studying formal or analytic local objects (vector fields or diffeomorphisms) around the origin in $\mathbb R^n$. In most of the situations encountered, the alphabet $\Omega$ is a commutative semigroup, typically the positive integers, $\Omega=\mathbb N_{>0}=\{1,2,3,\ldots\}$ or $\Omega=\mathbb N_{>0}^n$. In this case, two associative products are available on the vector space of moulds: the \textsl{mould product} $\times$ and the \textsl{mould composition} $\circ$. The definition of the latter involves the semigroup structure of $\Omega$ in an essential way (see the definitions in Section \ref{sect:moules}).\\

The algebra $\bm k\langle\!\langle\Omega\rangle\!\rangle$ is the dual of the coalgebra $\bm k\langle\Omega\rangle$ of noncommutative polynomials with variables in $\Omega$, endowed with the deconcatenation coproduct:
$$
	\Delta(\bm\omega):=\sum_{\bm\omega'.\bm\omega''=\bm\omega}\bm\omega'\otimes\bm\omega''.
$$
Moreover, a mould gives rise, by linear extension, to a unique linear form on $\bm k\langle\Omega\rangle$. The identification of the vector space of moulds with $\bm k\langle\!\langle\Omega\rangle\!\rangle$ is achieved through the map:
$$
	M \mapsto W^M:=\sum_{\bm\omega\in\Omega^*}M ^{\bm\omega}\bm\omega,
$$
which associates to each mould $M$ its corresponding word series $W^M \in \bm k\langle\!\langle\Omega\rangle\!\rangle$. It is well-known \cite{H2} that $\Cal H^{\Omega} = (\bm k\langle\Omega\rangle,\,\qshu,\Delta)$ is a commutative Hopf algebra, where $\,\qshu$ is Hoffman's quasi-shuffle product, recursively defined by $\bm\omega\qshu\un = \un\qshu\bm\omega = \bm\omega$ (here $\un$ stands for the empty word) and:
$$
	a\bm\omega'\qshu b\bm\omega''
	:= a(\bm\omega'\qshu b\bm\omega'')
		+b(a\bm\omega'\qshu \bm\omega'')
			+[a+b](\bm\omega'\qshu \bm\omega'').
$$
Here $a$ and $b$ are letters in $\Omega$ and $\bm\omega',\bm\omega''$ are words in $\Omega^*$. The notation $[a+b]$ stands for the internal sum of the two letters in the commutative semigroup $\Omega$. The Hopf algebra $\Cal H^{\Omega}$ is $(\Omega\sqcup\{0\})$-graded by the weight defined by $||\un||:=0$ and:
$$
	 ||\bm\omega|| := [\omega_1+\cdots+\omega_\ell] \in \Omega
$$
for a word $\bm\omega = \omega_1\cdots\omega_\ell \in \Omega^*$ of length $|\bm\omega|:=\ell$. The mould product $\times$ of \cite{E1} is hence obtained by dualizing the deconcatenation coproduct $\Delta$, and thus reflects the noncommutative concatenation product in $\bm k\langle\!\langle\Omega\rangle\!\rangle$, namely:
$$
	W^M.W^N = W^{M\times N}.
$$

Inspired by mould composition, i.e., substitution of noncommutative formal series, we exhibit in this paper a second coproduct $\Gamma$ on $\Cal H^{\Omega}$ which is coassociative and compatible with the quasi-shuffle product. It is \textsl{internal} in the sense that it respects each homogeneous component with respect to the weight grading. Moreover, it endows Hoffman's quasi-shuffle Hopf algebra $(\Cal H^{\Omega},\,\qshu,\Delta)$ with the structure of comodule-Hopf algebra \cite{M77} over the bialgebra $(\Cal H^{\Omega},\,\qshu,\Gamma)$ (Theorem \ref{coprod-c}). Surprisingly enough, dualizing the internal coproduct $\Gamma$ gives rise to a second composition product $\diamond$ on moulds which does not coincide with the mould composition $\circ$ in general. More precisely the identity $M\diamond N=M\circ N$ holds when the mould $N$ is \textsl{symmetrel} \cite{E92}, i.e., when the identity
$$
	N^{\bm\omega'\,\sqshu\bm\omega''} 
		= N^{\bm\omega'} N^{\bm\omega''}
$$
holds for any words $\bm\omega',\bm\omega''\in\Omega^*$. The composition $\circ$ distributes over the mould product $\times$ on the right, whereas the interplay between the product $\diamond$ and the mould product $\times$ is described by the comodule-Hopf algebra structure.

\smallskip

The paper is organized as follows. In Section \ref{sect:moules} we recall some basics of J.~Ecalle's mould calculus.  Our main result (Theorem \ref{coprod-c}) is proved in Section \ref{sect:qs} via generalized quasi-symmetric functions, by means of sum and product of two auxiliary totally ordered alphabets \cite{KLT97, NPT13}. Section \ref{sect:wqsh} is devoted to the notion of weak quasi-shuffles, which are surjective maps generalizing quasi-shuffles. They are then used to provide an alternative, more pedestrian proof of Theorem \ref{coprod-c}. A link with contracting arborification \cite{EV} is investigated in Section \ref{arborification}. In particular, we prove (in Theorem \ref{arbo-interne}) that contracting arborification changes the composition $\diamond$ into an analogous composition $\diamond$ of arborescent moulds, obtained by dualizing an internal coproduct which is a straightforward decorated version of the one given in \cite{CEM}. The analog of the mould composition $\circ$ for arborescent moulds was given in \cite{EV95}. Its relation with composition $\circ$ of ordinary moulds via contracting arborification was precised in \cite{Me06}.\\

\smallskip

\noindent\textbf{Acknowledgements}: We thank Fr\'ed\'eric Patras for interesting discussions at an early stage of the paper, and Jean-Yves Thibon for crucial illuminating remarks about manipulations of alphabets. We thank the two anonymous referees for their pertinent remarks which lead to substantial improvement of the paper. Work partially supported by Agence Nationale de la Recherche, projet CARMA  NR-12-BS01-0017.

\section{Background on mould calculus}
\label{sect:moules}

Mould calculus evolved as part of Ecalle's resurgence theory, and consists of a combinatorial setting which provides explicit as well as efficient formulas for studying local properties of dynamical systems. In this section we recall some basic facts on mould calculus. See references \cite{C05,E1} for more details.

\subsection{The algebraic setting}
\label{ssect:setting}

Let $\Omega$ be an alphabet endowed with a commutative semigroup law written additively. For example we can choose positive integers, i.e., $\Omega=\NN_{>0}=\{1,2,3,\ldots\}$. A word $\bm\omega$ consists of a string of letters $\omega_i \in \Omega$, and will be denoted:
$$
	\bm\omega=\omega_1\cdots \omega_n.
$$
The \textsl{length} of $\bm \omega$ is given by its number $|\bm\omega|=n$ of letters. The \textsl{weight} of the word $\bm\omega$ is defined to be the sum of its letters in $\Omega$:
\begin{equation}
\label{mould-weight}
	\|\bm\omega\|:=\left[\sum_{i=1}^n\omega_i\right] \in \Omega,
\end{equation}
where the brackets indicate the internal sum in the commutative semigroup $\Omega$, in contrast with formal linear combinations which will be widely used in the sequel. Hence the weight takes its values in $\Omega\sqcup\{0\}$. The unique word of weight zero is the empty word, denoted by $\un$, which happens to be of length zero, i.e., $|\un|=\|\un\|=0$. The concatenation of two words $\bm\omega=\omega_1\cdots \omega_p$ and $\bm\omega'=\omega_{p+1}\cdots \omega_{p+q}$ is defined to be the word:
$$
	\bm\omega.\bm\omega'=\omega_{1}\cdots \omega_{p+q}
$$
of length $|\bm\omega.\bm\omega'|=p+q$. It defines a noncommutative, associative and unital product, with the unit being the empty word. We denote by $\Omega^*$ the monoid of words on $\Omega$ thus defined. Let $\Cal H^{\Omega}$ be the vector space (over some base field $\bm k$) spanned by the elements of $\Omega^*$. A \textsl{mould} on the alphabet $\Omega$ is a linear form $M$ on $\Cal H^{\Omega}$ (or, more generally, a linear map from $\Cal H^{\Omega}$ into some unital commutative $\bm k$-algebra $\Cal A$). Note that in the literature a mould is sometimes denoted ${M}^{\bullet}$. The evaluation of $M$ at a word $\bm\omega$ will be denoted by $M^{\bm\omega} \in \Cal A$. For two moulds $N,M$ we recall the definitions of {\it{mould multiplication}} and {\it{mould composition}}, respectively:
\allowdisplaybreaks
\begin{eqnarray}
	(M\times N)^{\bm\omega}&=&\sum_{\bm\omega'.\bm\omega''
	=\bm\omega}M^{\bm\omega'}N^{\bm\omega''},\label{mp}\\
	(M\circ N)^{\bm\omega}&=&\sum_{s\ge 1}\sum_{\bm{\omega}
	=\bm{\omega}^1.\, \cdots .\, \bm{\omega}^s} 
	M^{\|\bm\omega^1\|\cdots\|\bm\omega^s\|}N^{\bm\omega^1}\cdots N^{\bm\omega^s}.\label{mc}
\end{eqnarray}
Recall that for $1 \le i \le s$ the weight $\|\bm\omega^i\|$ is a letter in $\Omega$. The basic algebraic properties of mould calculus can be stated as follows \cite{E1}.

\begin{prop}\label{mould-calculus}
Mould multiplication and composition are both associative and noncommutative. Composition distributes on the right over multiplication, namely:
$$
	(M\times M')\circ N=(M\circ N)\times(M'\circ N)
$$
for any triple of moulds $(M,M',N)$. The unit for mould multiplication is the mould $\varepsilon$ defined by $\varepsilon^{\bm 1}=1$ and $\varepsilon^{\bm\omega}=0$ for any nontrivial word $\bm\omega \in \Omega^*$. The unit for mould composition is the mould $I$ defined by $I^\omega=1$ for any letter $\omega \in \Omega$ and $I^{\bm\omega}=0$ for $\bm\omega=\un$ or length $|\bm\omega |\ge 2$.
\end{prop}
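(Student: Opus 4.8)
The plan is to verify each assertion by a direct manipulation of the defining sums \eqref{mp} and \eqref{mc}; the only structural fact used is associativity of the semigroup law on $\Omega$, but the combinatorial reorganization needed for associativity of $\circ$ has to be set up with some care, and that is where the real content lies. \textbf{Mould multiplication.} Associativity of $\times$ can be read off by dualizing coassociativity of deconcatenation, or checked directly: expanding both sides of $(M\times N)\times P = M\times(N\times P)$, one finds that both evaluate at $\bm\omega$ to $\sum_{\bm\omega=\bm\alpha.\bm\beta.\bm\gamma}M^{\bm\alpha}N^{\bm\beta}P^{\bm\gamma}$, the sum running over ordered factorizations into three possibly empty factors. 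Noncommutativity is visible already on a two-letter word $ab$ with $a\ne b$ and moulds supported on letters, since $(M\times N)^{ab}=M^aN^b\ne N^aM^b=(N\times M)^{ab}$ for generic values. For the unit, $(\varepsilon\times M)^{\bm\omega}=\sum_{\bm\omega'.\bm\omega''=\bm\omega}\varepsilon^{\bm\omega'}M^{\bm\omega''}=M^{\bm\omega}$ because only $\bm\omega'=\un$ contributes, and symmetrically $M\times\varepsilon=M$.

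\textbf{Right distributivity.} I would begin from
\[
\bigl((M\times M')\circ N\bigr)^{\bm\omega}=\sum_{s\ge1}\ \sum_{\bm\omega=\bm\omega^1.\,\cdots.\,\bm\omega^s}(M\times M')^{\|\bm\omega^1\|\cdots\|\bm\omega^s\|}\,N^{\bm\omega^1}\cdots N^{\bm\omega^s},
\]
expand the middle factor by \eqref{mp} as $\sum_{j=0}^{s} M^{\|\bm\omega^1\|\cdots\|\bm\omega^j\|}M'^{\|\bm\omega^{j+1}\|\cdots\|\bm\omega^s\|}$, and for each cut $j$ regroup the first $j$ blocks into $\bm\mu:=\bm\omega^1.\,\cdots.\,\bm\omega^j$ and the remaining ones into $\bm\nu:=\bm\omega^{j+1}.\,\cdots.\,\bm\omega^s$. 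Summing over all choices then reassembles the expression as $\sum_{\bm\omega=\bm\mu.\bm\nu}(M\circ N)^{\bm\mu}(M'\circ N)^{\bm\nu}=\bigl((M\circ N)\times(M'\circ N)\bigr)^{\bm\omega}$.

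\textbf{Composition associativity and unit.} This is the main obstacle. Expanding $\bigl((M\circ N)\circ P\bigr)^{\bm\omega}$ one meets $M\circ N$ evaluated on the weight-word $\|\bm\omega^1\|\cdots\|\bm\omega^s\|$ of a factorization $\bm\omega=\bm\omega^1.\,\cdots.\,\bm\omega^s$; expanding that in turn by \eqref{mc} amounts to cutting the index set $\{1,\dots,s\}$ into consecutive intervals $I_1<\cdots<I_t$. The decisive observation is that the word then fed to $M$ is $\|\bm\zeta^1\|\cdots\|\bm\zeta^t\|$, where $\bm\zeta^j$ denotes the concatenation of the blocks $\bm\omega^i$ with $i\in I_j$, because $\bigl[\sum_{i\in I_j}\|\bm\omega^i\|\bigr]=\|\bm\zeta^j\|$ --- which is precisely associativity of the semigroup law on $\Omega$ (additivity of the weight under concatenation). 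Consequently the pair of data (a factorization $\bm\omega=\bm\omega^1.\,\cdots.\,\bm\omega^s$, a grouping $I_1,\dots,I_t$) is in bijection with (a factorization $\bm\omega=\bm\zeta^1.\,\cdots.\,\bm\zeta^t$ into nonempty blocks, together with a factorization of each $\bm\zeta^j$ into nonempty blocks), and under this bijection the generic summand turns into $M^{\|\bm\zeta^1\|\cdots\|\bm\zeta^t\|}\prod_{j=1}^t(N\circ P)^{\bm\zeta^j}$, which is exactly the generic summand of $\bigl(M\circ(N\circ P)\bigr)^{\bm\omega}$; noncommutativity of $\circ$ is again seen on short words. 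Finally, for the unit $I$: in $(M\circ I)^{\bm\omega}$ a factor $I^{\bm\omega^i}$ vanishes unless $\bm\omega^i$ is a single letter, so only the factorization into letters survives and $(M\circ I)^{\bm\omega}=M^{\|\omega_1\|\cdots\|\omega_{|\bm\omega|}\|}=M^{\bm\omega}$ since the weight of a letter is that letter; in $(I\circ M)^{\bm\omega}$ the factor $I^{\|\bm\omega^1\|\cdots\|\bm\omega^s\|}$ vanishes unless $s=1$, leaving $I^{\|\bm\omega\|}M^{\bm\omega}=M^{\bm\omega}$ for $\bm\omega\ne\un$, the empty word being covered by the convention that the (otherwise vacuous) $s=0$ term in \eqref{mc} contributes $M^{\un}$.
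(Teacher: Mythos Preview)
Your proof is correct but follows a genuinely different route from the paper. The paper proves associativity of $\circ$ and right distributivity not by direct manipulation of the sums \eqref{mp}--\eqref{mc}, but by interpreting mould composition as substitution of alphabets: it introduces the word series $W^M=\sum_{\bm\omega}M^{\bm\omega}\bm\omega$ and the unital algebra endomorphism $\jmath^M$ of $\Cal A\langle\!\langle\Omega\rangle\!\rangle$ determined on letters by $\kappa\mapsto\sum_{\|\bm\omega\|=\kappa}M^{\bm\omega}\bm\omega$, then proves (Lemma~\ref{Wjmath}) that $W^{M\times N}=W^M.W^N$ and $\jmath^N\circ\jmath^M=\jmath^{M\circ N}$. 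Associativity of $\circ$ then drops out of associativity of function composition, and right distributivity from the fact that $\jmath^N$ is multiplicative.

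Your argument, by contrast, stays at the level of explicit factorizations: associativity of $\circ$ is reduced to the bijection between (a factorization $\bm\omega=\bm\omega^1.\cdots.\bm\omega^s$ together with a grouping of $\{1,\dots,s\}$ into consecutive intervals) and (a factorization $\bm\omega=\bm\zeta^1.\cdots.\bm\zeta^t$ together with sub-factorizations of each $\bm\zeta^j$), with the matching of the $M$-argument using additivity of weight. This is entirely sound and more self-contained, requiring no auxiliary constructions. What the paper's approach buys is a structural explanation---the identities become formal consequences of $\jmath$ being an algebra morphism---and this viewpoint is reused later (e.g.\ in the word-series and $S$-series formalism). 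What your approach buys is directness and a clearer view of the underlying combinatorics, which is close in spirit to the ``pedestrian'' arguments the paper gives elsewhere (Section~\ref{sect:wqsh}). Your handling of the empty word via an $s=0$ convention is in fact more explicit than the paper, which simply declares the unit properties ``immediate''.
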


\begin{proof}
The unit properties for $\varepsilon$ and $I$ as well as the noncommutativity of both products are immediate. Whereas the associativity of the mould multiplication is easily checked (it is nothing but the convolution product dual to the deconcatenation coproduct), the two other properties involving mould composition are better seen when the latter is interpreted as a substitution of alphabets. Indeed, any $\Cal A$-valued mould $M$ gives rise to a \textsl{word series} \cite{MS}:
\begin{equation}\label{word-series}
	W^M:=\sum_{\bm\omega\in\Omega^*}M^{\bm \omega}\bm \omega,
\end{equation}
which obviously determines the mould $M$ in return. The space of noncommutative formal series with variables in $\Omega$ (word series) and with coefficients in $\Cal A$ is denoted by $\Cal A\langle\!\langle\Omega\rangle\!\rangle$. The subspace of (noncommutative) polynomials is denoted $\Cal A\langle\Omega\rangle$. The homogeneous components of $W^M$ with respect to weight defined in \eqref{mould-weight} are given for any letter $\kappa\in\Omega$ by:
\begin{equation}\label{hc}
	\iota^M(\kappa):=\sum_{\bm\omega\in\Omega^* \atop ||\bm\omega||
				 =\kappa}M^{\bm \omega}\bm \omega.
\end{equation}
This gives rise to a linear map $\iota^M:\Omega\to\Cal A\langle\!\langle\Omega\rangle\!\rangle$, which uniquely extends by $\Cal A$-linearity, multiplicativity and completion, to a unital $\Cal A$-algebra endomorphism $\jmath^M:\Cal A\langle\!\langle\Omega\rangle\!\rangle\to\Cal A\langle\!\langle\Omega\rangle\!\rangle$. Remark that the word series of the mould $I$ is given by the formal sum of the letters in $\Omega$:
\begin{equation}\label{comp-id}
	W^I=\sum_{\omega\in\Omega}\omega,
\end{equation}
such that $\jmath^I=\mop{Id}$. From \eqref{word-series}, \eqref{hc} and \eqref{comp-id} we immediately get for any mould $M$ its corresponding word series:
\begin{equation}\label{word-series-bis}
	W^M=\jmath^M(W^I).
\end{equation}

\begin{lem}\label{Wjmath}
Let $M,N$ be two $\Cal A$-valued moulds on the alphabet $\Omega$, where $\Cal A$ is a commutative unital $\bm k$-algebra. Then:
\begin{enumerate}
	\item $W^{M\times N}=W^M.W^N$,
	\item $\jmath^N\circ\jmath^M=\jmath^{M\circ N}$.
\end{enumerate}
\end{lem}

\begin{proof}
Proving the first assertion is straightforward:
\allowdisplaybreaks{
\begin{eqnarray*}
	W^{M\times N}
	&=&\sum_{\bm \omega\in\Omega^*}(M\times N)^{\bm\omega}\bm\omega\\
	&=&\sum_{\bm \omega\in\Omega^*}\sum_{\bm\omega'.\bm \omega''
		=\bm\omega}M^{\bm\omega'}N^{\bm\omega''}\bm\omega\\
	&=&\sum_{\bm\omega',\bm\omega''\in\Omega^*}M^{\bm\omega'}N^{\bm\omega''}\bm\omega'.\bm\omega''\\
	&=&W^M.W^N.
\end{eqnarray*}}
Now let $\kappa$ be any letter in $\Omega$, and compute:
\allowdisplaybreaks{
\begin{eqnarray*}
	\jmath^N\circ\jmath^M(\kappa)
	&=&\jmath^N\Big(\sum_{\bm\omega\in\Omega^* \atop ||\bm\omega||=\kappa}M^{\bm\omega}\bm\omega\Big)\\
	&=&\sum_{\bm\omega\in\Omega^* \atop ||\bm\omega||=\kappa}M^{\bm\omega}\jmath^N(\bm\omega)\\
	&=&\sum_{r\ge 1}\sum_{\bm\omega\in\Omega^* \atop {||\bm\omega||=\kappa,\ |\bm\omega|=r}}
		M^{\bm\omega}\jmath^N(\omega_1)\cdots\jmath^N(\omega_r)\\
	&=&\sum_{r\ge 1}\sum_{\bm\omega\in\Omega^* \atop \big[||\bm\omega^{1}||+\cdots+||\bm\omega^{r}||\big]=\kappa}
		M^{||\bm\omega^{1}||\cdots||\bm\omega^{r}||}N^{\bm\omega^{1}}\cdots 
				N^{\bm\omega^{r}}\bm\omega^{1}\cdots\bm\omega^{r}\\
	&=&\sum_{r\ge 1}\sum_{\bm\omega\in\Omega^* \atop ||\bm\omega||=\kappa}
	\Big(\sum_{\bm\omega^{1}\cdots\bm\omega^{r}=\bm\omega} M^{||\bm\omega^{1}||\cdots||\bm\omega^{r}||}
			N^{\bm\omega^{1}}\cdots N^{\bm\omega^{r}}\Big)\bm\omega\\
	&=&\jmath^{M\circ N}(\kappa).
\end{eqnarray*}}%
Both $\jmath^{M\circ N}$ and $\jmath^{N}\circ\jmath^M$ are algebra morphisms that coincide on letters from $\Omega$, hence they are equal.
\end{proof}

\noindent\textit{Proof of Proposition \ref{mould-calculus} {\rm{(continued)}}}: from \eqref{word-series}, \eqref{hc}, \eqref{comp-id} and Lemma \ref{Wjmath} we have for any moulds $M,N,P$:
\begin{equation}
	W^{M\circ (N\circ P)}=\jmath^P\circ\jmath^N\circ\jmath^M(W^I)=W^{(M\circ N)\circ P},
\end{equation}
and for any moulds $M,M',N$:
\allowdisplaybreaks{
\begin{eqnarray*}
	W^{(M\times M')\circ N}&=&\jmath^N\circ\jmath^{M\times M'}(W^I)=\jmath^N(W^{M\times M'})\\
	&=&\jmath^N(W^M).\jmath^N(W^{M'})=\jmath^N\circ\jmath^M(W^I).\jmath^N\circ\jmath^{M'}(W^I)\\
	&=&W^{M\circ N}.W^{M'\circ N}\\
	&=&W^{(M\circ N)\times(M'\circ N)}.
\end{eqnarray*}}
\end{proof}

Let us recall for later use that a mould $M$ is called \textsl{symmetrel} if it respects the quasi-shuffle product, i.e., if for any  words $\bm\omega, \bm\omega' \in \Omega^*$:
\begin{equation}
\label{symmetrel}
	M^{\bm\omega\,\sqshu\!\bm\omega'}=M^{\bm\omega}M^{\bm \omega'},
\end{equation}
where the quasi-shuffle product $\,\qshu$ of words is recalled in Section \ref{sect:main} (with reference to the next section) below. A mould is \textsl{symmetral} if it respects the ordinary shuffle of words. The \textsl{Hoffman exponential} \cite{H2} establishes a bijection from shuffle onto quasi-shuffle Hopf algebra, hence from symmetral onto symmetrel moulds. The latter can be expressed as mould composition with the \textsl{exponential mould} defined by:
\begin{equation}
\exp^{\bm\omega}:=\frac{1}{\vert\bm\omega\vert!},
\end{equation}
i.e., $M$ is symmetral if and only if $M\circ\mop{exp}$ is symmetrel \cite[Paragraph 2.1.12]{E02}.

\subsection{On geometric growth condition}
\label{ssect:growth}

In the applications of mould calculus to dynamical systems, for which J.~Ecalle had invented and developed this powerful formalism, the mould operations, and specifically the{\sl{ interplay}} between mould composition and product, are crucial in many occurrences for obtaining important results. Notably the ones pertaining to the growth properties of the moulds involved. For the analyst, indeed, what is at stake is eventually the convergence of expansions containing, say, a complex valued mould $M=M^{\bullet}$ , indexed, e.g., by $\Omega= \mathbb{N}_{>0}$ or $\Omega=\mathbb R_{>0}$, which thus must satisfy estimates of the following type:
\begin{equation}
\label{growth}
	| M^{\bm \omega} | \leqslant C \kappa^{\|\bm\omega\|} \hspace{2em}
\end{equation}
with $C, \kappa \in \mathbb {R}_{>0}$. If $N$ is another mould verifying geometric growth condition \eqref{growth} with constants $C'$ and $\kappa'$, elementary computations show that $M \times N$ and $M \circ N$ also grow geometrically in the case $\Omega= \mathbb N_{>0}$:
\begin{eqnarray}
	\vert(M\times N)^{\bm\omega}\vert&\leqslant& 
	CC'(\vert\bm\omega\vert+1) \big(\mop{max}(\kappa,\kappa')\big)^{\|\bm\omega\|} 
	\leqslant CC'(\|\bm\omega\|+1) \big(\mop{max}(\kappa,\kappa')\big)^{\|\bm\omega\|},\\
	\vert(M\circ N)^{\bm\omega}\vert&\leqslant& 
	C(1+C')^{\vert\bm \omega\vert-1}(\kappa\kappa')^{\|\bm\omega\|}\leqslant C\big((1+C')\kappa\kappa'\big)^{\|\bm\omega\|}.
\end{eqnarray}

Compositional inversion and compositional logarithm, however, do not preserve geometrical growth. Hence, to prove such a property for a given mould, some intermediate key moulds are quite often obtained, with a closed form expression that makes it possible to verify straightforwardly the geometrical growth. A clever use of product and composition can then rather easily yield the sought after property for the other moulds, which are connected to the ones for which geometrical growth is already established.

Several sophisticated examples of such a scheme can be found, e.g., in reference \cite{E02}. As an illustration, we extract the following simple example from section 6 of the aforementioned article. For matters of resummation of real analytic divergent series, two symmetrel moulds ${rem}^{\bullet}$ and ${lem}^{\bullet}$ (indexed by sequences of positive numbers, i.e., $\Omega=\mathbb R_{>0}$, and with values in $\mathbb {C}$) are considered, which are bound by the following relation. Note that we follow the notation used in \cite[Paragraph 7.5]{E02}, in which $J^{\bullet}$ designates the elementary symmetrel mould $J^{ \omega_1 \cdots \omega_r} = ( - 1)^r$, and:
\[ 
	{lem}^{\bullet} = ({rem}^{\bullet} \circ J^{\bullet}) \times J^{\bullet} 
\]
With such a formula, the growth properties of ${rem}^{\bullet}$ and ${lem}^{\bullet}$ are clearly connected and, at the level of arborescent moulds (see Section \ref{arborification} below), such relations remain valid and enable to avoid calculations which
would otherwise be quite intractable without the combined use of the $\times$ and $\circ$ operations.

We remark that, in the language of Hopf algebras, characters with good growth properties have very recently been systematically studied in particular in \cite{BS}, where the notion of tame characters has been introduced.

\section{Quasi-symmetric functions and totally ordered alphabets}
\label{sect:qs}

\subsection{Hoffman's quasi-shuffle Hopf algebra}
\label{ssect:HoffmanHopf}

Let $p,q,r$ be three nonnegative integers, with $p,q \ge 1$ and $r \le p+q-1$. We denote by $\mop{qsh}(p,q;r)$ the set of \textsl{$(p,q)$-quasi-shuffles of type $r$}, i.e., surjective maps:
$$
	\sigma:\{1,\ldots,p+q\}\fleche 8\hskip -5.8mm\fleche 6\{1,\ldots,p+q-r\}
$$
subject to the conditions $\sigma_1<\cdots <\sigma_p$ and $\sigma_{p+1}<\cdots <\sigma_{p+q}$. Quasi-shuffles of type $r=0$ are the ordinary $(p,q)$-shuffles, which are the permutations of the set $\{1,\ldots,p+q\}$ which display the $p$ first (resp.~$q$ last) elements in increasing order.\\

We keep the notations of Section \ref{sect:moules}. The \textsl{quasi-shuffle product} of two words $\bm\omega=\omega_1\cdots\omega_p$ and $\bm\omega'=\omega_{p+1}\cdots\omega_{p+q}$ in $\Cal H^{\Omega}$ is defined by the formal sum:
\begin{equation}\label{quasi-shuffle}
	\omega_1\cdots\omega_p\qshu\omega_{p+1}\cdots\omega_{p+q}
	= \sum_{r\ge 0}\sum_{\sigma\in\smop{qsh}(p,q;r)}\omega^\sigma_1\cdots\omega^\sigma_{p+q-r},
\end{equation}
with $\omega^\sigma_k:=\left[\sum_{\sigma_j=k}\omega_j\right] \in \Omega$. Note that the sum inside the brackets contains either one or two terms. The product \eqref{quasi-shuffle} is associative as well as commutative, and has  the empty word $\un$ as unit. A more immediate way of calculating quasi-shuffle products of words is given in terms of the equivalent recursive definition:
\allowdisplaybreaks
\begin{align*}
	\bm\omega\qshu\bm\omega' &=
	\omega_1(\omega_2\cdots\omega_p\qshu\omega_{p+1}\cdots\omega_{p+q}) 
	+ \omega_{p+1}(\omega_1\cdots\omega_p\qshu\omega_{p+2}\cdots\omega_{p+q})\\
	&\qquad\ + [\omega_1 + \omega_{p+1}](\omega_2\cdots\omega_p\qshu\omega_{p+2}\cdots\omega_{p+q}).
\end{align*} 
For example, 
\allowdisplaybreaks
\begin{align*}
	\omega_1\qshu\omega_{2}=\omega_2\qshu\omega_{1}
	&=\omega_1\omega_{2} + \omega_2\omega_{1} + [\omega_1+\omega_{2}],\\
	\omega_1\omega_{2}\qshu\omega_{3}=\omega_3\qshu\omega_1\omega_2
	&=\omega_1(\omega_{2}\qshu\omega_{3}) + \omega_{3}\omega_1\omega_{2} + [\omega_1+\omega_{3}]\omega_{2}\\
	&=\omega_1\omega_2\omega_3+\omega_1\omega_3\omega_2+\omega_{3}\omega_1\omega_{2} +\omega_1[\omega_2+\omega_3]+ [\omega_1+\omega_{3}]\omega_{2}.
\end{align*}
\ignore{In the case of an alphabet $\Omega$ endowed with a trivial semigroup law, $[\omega_i+\omega_j]=0$, the quasi-shuffle product \eqref{quasi-shuffle} reduces to the usual shuffle product \cite{reutenauer}. The latter is defined in terms of particular bijections, i.e., the $(p,q)$-shuffles mentioned in the foregoing section.\\}

\noindent Let $\Delta$ be the \textsl{deconcatenation coproduct} defined on words  $\bm\omega=\omega_1\cdots\omega_p \in \Omega^*$:
\begin{equation}
\label{deconcat}
	\Delta(\omega_1\cdots\omega_p)
	=	\omega_1\cdots\omega_p \otimes \un + \un \otimes \omega_1\cdots\omega_p 
		+\sum_{j=1}^{p-1}\omega_1\cdots\omega_j\otimes\omega_{j+1}\cdots\omega_p.
\end{equation}
It is well-known that $(\Cal H^{\Omega},\,\qshu,\Delta)$ is a commutative, noncocommutative, connected Hopf algebra, graded by the weight defined in \eqref{mould-weight}. See \cite{H2} for details.

\subsection{The internal coproduct and statement of the main result}
\label{ssect:newCoprod}

Inspired by mould composition, we introduce the \textsl{decomposition coproduct} $\Gamma$ on $\Cal H^{\Omega}$, given for any word $\bm\omega \in \Omega^*$ by:
\begin{equation}\label{dual-comp}
	\Gamma(\bm\omega)
	:=\sum_{s\ge 1}\sum_{\bm{\omega}=\bm{\omega}^1.\, \cdots .\, \bm{\omega}^s}
	\|\bm\omega^1\|\cdots\|\bm\omega^s\|\otimes \bm\omega^1\qshu\cdots\qshu\bm\omega^s.
\end{equation}
\begin{thm}\label{coprod-c}
The coproduct $\Gamma: \Cal H^{\Omega} \to \Cal H^{\Omega} \otimes \Cal H^{\Omega}$ is coassociative, noncocommutative and compatible with the quasi-shuffle product {\rm $\,\qshu$}. Moreover, Hoffman's quasi-shuffle Hopf algebra {\rm $(\Cal H^{\Omega},\,\qshu,\Delta)$} is a right comodule-Hopf algebra on the bialgebra {\rm $(\Cal H^{\Omega},\,\qshu,\Gamma)$}, in the sense that the following diagrams commute:
{\rm
\diagramme{
\xymatrix{\Cal H^{\Omega}\ar[rr]^{\Gamma}\ar[d]^{\Delta}&&\Cal H^{\Omega}\otimes\Cal H^{\Omega}\ar[d]^{\Delta\otimes\smop{Id}}\\
\Cal H^{\Omega}\otimes\Cal H^{\Omega}\ar[d]^{\Gamma\otimes\Gamma}&&\Cal H^{\Omega}\otimes\Cal H^{\Omega}\otimes\Cal H^{\Omega}\\
\Cal H^{\Omega}\otimes\Cal H^{\Omega}\otimes\Cal H^{\Omega}\otimes\Cal H^{\Omega}\ar[rr]^{\tau_{23}}&&\Cal H^{\Omega}\otimes\Cal H^{\Omega}\otimes\Cal H^{\Omega}\otimes\Cal H^{\Omega}\ar[u]_{\smop{Id}\otimes\smop{Id}\otimes\;\sqshu}
}}
\diagramme{
\xymatrix{\Cal H^\Omega\ar[r]^\Gamma\ar[d]_{\varepsilon} & \Cal H^\Omega\otimes\Cal H^\Omega\ar[d]^{\varepsilon\otimes \smop{Id}}\\
\bm k\ar[r]_u & \Cal H^{\Omega}
}
\hskip 20mm
\xymatrix{\Cal H^\Omega\ar[r]^\Gamma\ar[d]_{S} & \Cal H^\Omega\otimes\Cal H^\Omega\ar[d]^{S\otimes \smop{Id}}\\
\Cal H^{\Omega}\ar[r]_\Gamma & \Cal H^\Omega\otimes\Cal H^\Omega
}
}
}
\noindent where all arrows are algebra morphisms for the quasi-shuffle product \rm$\,\qshu$.
\end{thm}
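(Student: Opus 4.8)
The plan is to verify each of the four claimed properties in turn, relying throughout on the fact that $\Gamma$, once shown to be an algebra morphism for $\,\qshu$, is determined by its values on letters (since $\Cal H^\Omega$ is generated as a quasi-shuffle algebra by $\Omega$, by Hoffman's structure theorem), so that most diagram chases reduce to checking commutativity on a single letter $\kappa\in\Omega$. First I would record that on a letter $\kappa$ the only decomposition is the trivial one $\bm\omega=\kappa$, so $\Gamma(\kappa)=\kappa\otimes\kappa$; thus $\Gamma$ restricted to letters is ``group-like'', which immediately suggests why all the comodule-bialgebra axioms will hold. The most efficient route to \emph{all four statements at once} is the substitution-of-alphabets picture from Section~\ref{sect:moules}: dualizing, $\Gamma$ corresponds to the composition product $\diamond$ on moulds, and the recursive quasi-shuffle definition lets one identify $\Gamma$ with the coproduct encoding $W\mapsto \jmath(W)$ where letters are sent to homogeneous word-series components. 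I would make this precise by the auxiliary-alphabet device advertised in the introduction (following \cite{KLT97,NPT13}): introduce two disjoint totally ordered copies, form their ordered sum and their product, and realize $\Gamma$ via the coproduct dual to ``sum'' composed with the algebra map dual to ``product''. That structural identification is the step I expect to be the main obstacle, because it requires setting up generalized quasi-symmetric functions carefully enough that coassociativity and $\,\qshu$-compatibility become manifest rather than computational.

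Granting that setup, coassociativity $(\Gamma\otimes\id)\Gamma=(\id\otimes\Gamma)\Gamma$ follows because iterated decomposition of $\bm\omega$ into blocks, with the weight-word recorded in the first factor, is associative — concretely, refining a block decomposition $\bm\omega=\bm\omega^1\cdots\bm\omega^s$ by further decomposing each $\bm\omega^i$ matches, on weight-words, the composition identity $\jmath^N\circ\jmath^M=\jmath^{M\circ N}$ of Lemma~\ref{Wjmath}(2); I would spell this out by comparing the two triple sums and using that $\|\bm\omega^{i,1}\|\cdots\|\bm\omega^{i,t_i}\|$ quasi-shuffles over $i$ exactly produce the outer decomposition labels. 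Noncocommutativity is seen on one example, e.g.\ a two-letter word, where the second tensor factor carries a quasi-shuffle that is not symmetric in an obvious sense. Compatibility with $\,\qshu$, i.e.\ $\Gamma(\bm\omega\qshu\bm\omega')=\Gamma(\bm\omega)\qshu\Gamma(\bm\omega')$ (quasi-shuffle on both tensor slots, with the weight being additive so $\|\cdot\|$ is an algebra map to $\Omega\sqcup\{0\}$), is precisely the statement that $\Gamma$ is an algebra morphism; in the quasi-symmetric-function model this is bilinearity of the alphabet operations, and in the pedestrian model (Section~\ref{sect:wqsh}) it is an induction on total length using the recursive forms of both $\,\qshu$ and $\Gamma$.

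For the comodule-Hopf algebra part, I would check the big pentagon-shaped diagram by evaluating on a letter $\kappa$: since $\Delta(\kappa)=\kappa\otimes\un+\un\otimes\kappa$ and $\Gamma(\kappa)=\kappa\otimes\kappa$, both composites send $\kappa$ to $\kappa\otimes\kappa\otimes\un+\un\otimes\un\otimes\kappa$ after inserting the unit via $\sqshu$ with $\un$; because every arrow in the diagram is an algebra morphism for $\,\qshu$ (the deconcatenation coproduct $\Delta$ is one on the quasi-shuffle algebra — this is Hoffman's theorem — and $\tau_{23}$, $\Delta\otimes\id$, $\Gamma\otimes\Gamma$, $\id^{\otimes2}\otimes\sqshu$ manifestly are), agreement on the generators $\Omega$ forces agreement everywhere. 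The counit diagram is immediate from $\varepsilon(\kappa)=0$ and $(\varepsilon\otimes\id)\Gamma(\kappa)=\varepsilon(\kappa)\kappa=0=u\varepsilon(\kappa)$, extended multiplicatively. The antipode diagram $(S\otimes\id)\Gamma=\Gamma S$ I would get by noting both sides are algebra \emph{anti}-morphisms-turned-morphisms agreeing on letters: $S(\kappa)=-\kappa$, so $(S\otimes\id)\Gamma(\kappa)=-\kappa\otimes\kappa=\Gamma(-\kappa)=\Gamma S(\kappa)$, and since $S$ is the convexity-graded antipode of the commutative Hopf algebra $(\Cal H^\Omega,\,\qshu,\Delta)$ it is an algebra morphism for $\,\qshu$, so once more uniqueness on generators closes the argument. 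The one point demanding care is that $\Gamma$ genuinely \emph{is} an algebra map for $\,\qshu$ before invoking ``determined on generators'' — so logically I would prove $\,\qshu$-compatibility first and only then run the three diagram chases.
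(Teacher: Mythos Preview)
Your overall architecture is sensible, and the alphabet-substitution route you sketch is exactly the paper's first proof: realizing $\Cal H^\Omega$ as $\Omega$-quasi-symmetric functions on a totally ordered alphabet $X$, then obtaining $\Delta$ and $\Gamma$ from the ordinal sum $X+Y$ and the lexicographic product $XY$ respectively, so that coassociativity, $\,\qshu$-compatibility and the comodule identity all come from obvious identities of ordered sets (notably $(X+Y)Z\cong XZ+YZ$). If you actually carry that out, you are done and your proof coincides with Section~\ref{sect:qstoa}.

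The genuine gap is in your fallback strategy. You repeatedly invoke the claim that ``$\Cal H^\Omega$ is generated as a quasi-shuffle algebra by $\Omega$'', so that once $\Gamma$ is known to be an algebra morphism it suffices to check the pentagon, counit and antipode diagrams on single letters. This is \emph{false}: the commutative algebra $(\Cal H^\Omega,\,\qshu)$ is a polynomial algebra on Lyndon words (via Hoffman's isomorphism with the shuffle algebra), and the subalgebra generated by $\Omega$ is only the coradical $\Cal H^\Omega_0$---strictly smaller. Concretely, $\omega_1\,\qshu\,\omega_2=\omega_1\omega_2+\omega_2\omega_1+[\omega_1+\omega_2]$ never gives you $\omega_1\omega_2$ alone. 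Hence agreement of two $\,\qshu$-morphisms on letters does \emph{not} force agreement on all of $\Cal H^\Omega$, and your verification of the comodule diagrams (and, if you meant to use it there, of coassociativity) collapses. The paper's second proof (Section~\ref{sect:main}) avoids this entirely: it proves $\,\qshu$-compatibility via the weak-quasi-shuffle factorization (Lemma~\ref{wqsh}, Proposition~\ref{wqsh-bis}, Lemma~\ref{paquets}), and then checks coassociativity and the pentagon by direct manipulation of sums over nondecreasing surjections on an \emph{arbitrary} word, not just a letter. If you want a pedestrian proof, that is the computation you must actually do.
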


\subsection{Proof of Theorem \ref{coprod-c} via generalized quasi-symmetric functions and totally ordered alphabets}\label{sect:qstoa}

In the case $\Omega=\mathbb N_{>0}=\{1,2,3,\ldots\}$, the quasi-shuffle Hopf algebra $(\Cal H^{\Omega},\,\qshu,\Delta)$ admits a polynomial realization making it isomorphic to  the Hopf algebra $\mop{\bf QSym}$ of quasi-symmetric functions (on some infinite alphabet $X$). Following an idea by Novelli, Patras and Thibon \cite[Section 8]{NPT13}, this generalizes to any commutative semigroup $\Omega$ provided the Hopf algebra $\mop{\bf QSym}$ is replaced by the Hopf algebra $\mop{\bf QSym}^\Omega$ of $\Omega$-quasi-symmetric functions, the definition of which is recalled below. Theorem  \ref{coprod-c} can then be derived from simple manipulations on alphabets. The internal coproduct on $\mop{\bf QSym}^\Omega$ is related to the Tits product on set compositions of a given finite set (\cite{BZ09,B00}, see also \cite{AFM15, AM10, FFM15}).\\

Both coproducts on $\mop{\bf QSym}^\Omega$ can be described by means of the alphabet technique developed in \cite{KLT97}. Indeed, let $X$ be an auxiliary alphabet, supposed to be infinite and totally ordered. Let $\bm k^\Omega[[X]]$ be the $\bm k$-vector space of formal series with indeterminates in $X$ and exponents in $\Omega$, i.e., formal sums:
\begin{equation}
\label{series}
	\sum_{P \subset X \atop |P| < \infty}\,\sum_{\nu:P\to\Omega}\lambda_{P,\nu}\prod_{x\in P}x^{\nu(x)},
\end{equation}
where the coefficients $\lambda_{P,\nu}$ belong to the base field $\bm k$. The commutative multiplication on $\bm k^\Omega[[X]]$ is determined by the rule:
\begin{equation}
	x^\omega x^{\omega'}=x^{[\omega+\omega']}
\end{equation}
for any $x\in X$ and $\omega,\omega'\in\Omega$. A series in $\bm k^\Omega[[X]]$ is quasi-symmetric if, for any word $\bm\omega=\omega_1\cdots\omega_r\in \Omega^*$, the coefficient in front of $x_1^{\omega_1}\cdots x_r^{\omega_r}$ is the same for any $x_1<\cdots <x_r\in X$. The vector space of $\Omega$-quasi-symmetric functions is denoted by $\mop{\bf QSym}^\Omega(X)$. The $\Omega$-quasi-symmetric functions $Q_{\bm \omega}\in\bm k^\Omega[[X]]$ defined by:
\begin{equation}
	Q_{\bm \omega}(X):=\sum_{x_1<\cdots <x_r\in X}x_1^{\omega_1}\cdots x_r^{\omega_r}
\end{equation}
form a linear basis\footnote{Here we use the symbol $Q$ for quasi-symmetric functions instead of the usual notation $M$ which would be in conflict with the notations for moulds.} of $\mop{\bf QSym}^\Omega(X)$. One can easily prove the following identity for any words $\bm \omega',\bm\omega''\in \Omega^*$:
\begin{equation}
	Q_{\bm\omega'}(X)Q_{\bm\omega''}(X)=Q_{\bm\omega'\sqshu\bm\omega''}(X),
\end{equation}
making $\mop{\bf QSym}^\Omega(X)$ a commutative algebra  isomorphic to the quasi-shuffle algebra $(\bm k\langle\Omega\rangle,\, \qshu)$. Now let $Y$ be another infinite and totally ordered alphabet. We denote by $X+Y$ the ordinal sum of $X$ and $Y$, defined as the disjoint union $X\sqcup Y$ endowed with the unique total order which restricts to the total orders of $X$ and $Y$, and such that any element of $Y$ is bigger than any element of $X$. Let us also consider the product $XY$, defined as the cartesian product $X\times Y$ endowed with the lexicographical order\footnote{The sum and product of alphabets thus defined are associative, but obviously not commutative.}. One can compute:
\begin{eqnarray*}
	Q_{\bm\omega}(X+Y)&=&\sum_{z_1<\cdots<z_r\in X+Y}z_1^{\omega_1}\cdots z_r^{\omega_r}\\
	&=&\sum_{s=0}^r\sum_{x_1<\cdots<x_s\in X}\sum_{y_1<\cdots <y_{r-s}\in Y}x_1^{\omega_1}\cdots 
	x_s^{\omega_s}y_1^{\omega_{s+1}}\cdots y_{r-s}^{\omega_r}\\
	&=&\sum_{\bm\omega=\bm\omega'\bm\omega''}Q_{\bm\omega'}(X)Q_{\bm\omega''}(Y),
\end{eqnarray*}
as well as:
\begin{eqnarray*}
	Q_{\bm\omega}(XY)&=&\sum_{z_1<\cdots<z_r\in XY}z_1^{\omega_1}\cdots z_r^{\omega_r}\\
	&=&\sum_{(x_1,y_1)<\cdots <(x_r,y_r)\in XY}x_1^{\omega_1}y_1^{\omega_1}\cdots x_r^{\omega_r}y_r^{\omega_r}\\
	&=&\sum_{s=1}^r\sum_{\bm \omega^1\cdots\bm\omega^s=\bm\omega}\sum_{x_1<\cdots<x_s\in X}
	x_1^{\|\bm\omega^1\|}\cdots x_s^{\|\bm\omega^s\|} Q_{\bm\omega^1}(Y)\cdots Q_{\bm\omega^s}(Y)\\
	&=&\sum_{s=1}^r\sum_{\bm \omega^1\cdots\bm\omega^s=\bm\omega}Q_{\|\bm\omega_1\|\cdots\|\bm\omega_s\|}(X)Q_{\bm\omega^1}(Y)\cdots Q_{\bm\omega^s}(Y).
\end{eqnarray*}
In the second computation, any letter $(x,y)\in XY$ is identified with the product $xy$. Now if $Y$ is chosen to be a copy of $X$, both expressions can be seen as elements in $\mop{\bf QSym}^\Omega(X)\otimes\mop{\bf QSym}^\Omega(X)$, thus defining two coproducts $\overline\Delta$ and $\overline\Gamma$ on $\mop{\bf QSym}^\Omega(X)$. We obviously have:
$$
	\overline\Delta(Q_{\bm\omega})=(Q\otimes Q)_{\Delta\bm\omega},
	\hskip 12mm 
	\overline\Gamma(Q_{\bm\omega})=(Q\otimes Q)_{\Gamma\bm\omega}
$$
where $\Delta$ is the deconcatenation and $\Gamma$ is the internal coproduct defined earlier. Hence coassociativity of $\Delta$ and $\Gamma$ can be directly derived from the associativity of the sum (resp.~product) of alphabets. Compatibility with the product (which amounts to quasi-shuffle product on words) is naturally given. The comodule-Hopf algebra structure is derived from the natural isomorphism of totally ordered sets between $(X+Y)Z$ and $XZ+YZ$, and from the fact that the antipode is given by replacing alphabet $X$ with $-X$.

\subsection{Remarks on the internal coproduct}
\label{ssect:remarks}

\begin{rmk}\rm
Coassociativity of the coproduct $\Gamma$ can also directly be derived by duality from the associativity of mould composition. Indeed, we have $(M\otimes N)^{\Gamma\bm\omega}=(M\circ N)^{\bm \omega}$ when the moulds $M$ and $N$ are symmetrel. Hence for any word $\bm\omega\in\Omega^*$ we have:
\begin{equation}\label{comp-assoc}
	(M\otimes N\otimes P)^{[(\Gamma\otimes\smop{Id})\Gamma-(\smop{Id}\otimes\Gamma)\Gamma](\bm \omega)}=0,
\end{equation}
if $N$, $M$ and $P$ are symmetrel. Now we use the fact that, in characteristic zero, $\Cal H^\Omega$ is isomorphic to a symmetric algebra, namely the symmetric algebra of the free Lie algebra on $V$, where $V$ is the linear span of $\Omega$. A character of $\Cal H^\omega$ (i.e.~a symmetrel mould) is nothing but a point of the dual vector space $V^*$, and $\Cal H^\Omega$ is the algebra of polynomial functions on $V^*$. Hence for any $\bm \omega \in \Cal H^\omega-\{0\}$ there exists a symmetrel mould which does not vanish on $\bm\omega$. The Hopf algebra $(\Cal H^\Omega)^{\otimes 3}$ is the algebra of polynomial functions on $(V^*)^3$. From \eqref{comp-assoc} we have that any character of $(\Cal H^\Omega)^{\otimes 3}$ vanishes on $[(\Gamma\otimes\mop{Id})\Gamma-(\mop{Id}\otimes\Gamma)\Gamma](\bm \omega)$ for any $\bm\omega\in \Omega^*$, hence $(\Gamma\otimes\mop{Id})\Gamma=(\mop{Id}\otimes\Gamma)\Gamma$.\\

\noindent Conversely, dualizing $\Gamma$ gives a new associative ``mould composition" defined by:
\begin{equation}
	(M\diamond N)^{\bm\omega}=\sum_{s\ge 1}\sum_{\bm{\omega}=\bm{\omega}^1 .\,\cdots .\,\bm{\omega}^s}
	M^{\|\bm\omega^1\|\cdots\|\bm\omega^s\|}N^{\bm\omega^1\sqshu\cdots\ \sqshu\bm\omega^s}.\label{mcbis}
\end{equation}
From coassociativity of $\Gamma$ we can infer the associativity of $\diamond$, but nothing for $\circ$, although both compositions $\circ$ and $\diamond$ coincide when the right factor is symmetrel. The associativity of mould composition $\circ$ is then a stronger phenomenon than the coassociativity of the internal coproduct.
\end{rmk}

\begin{rmk}\rm
The compatibility of $\Gamma$ with the quasi-shuffle product immediately implies that the mould composition of two symmetrel moulds is symmetrel. Indeed, the composition $\circ$ of two symmetrel moulds coincides with their composition $\diamond$, which is their convolution product with respect to the internal coproduct $\Gamma$.
\end{rmk}

\begin{rmk}\rm
Observe that letters $\omega_i \in \Omega$ are group-like for the coproduct $\Gamma$. Indeed, since $\|\omega_i\|=\omega_i\in \Omega$ we have
$$
	\Gamma(\omega_i) = \omega_i \otimes \omega_i,
$$  
and equality \eqref{group-like} implies that the quasi-shuffle product of group-like elements is group-like:
$$
	\Gamma(\omega_i \qshu \omega_j)=(\omega_i\qshu \omega_j)\otimes(\omega_i \qshu \omega_j).
$$
Next, we consider the coproduct of words of length two
$$
	\Gamma(\omega_i\omega_j) = \omega_i\omega_j \otimes \omega_i \qshu \omega_j 
	+ \|\omega_i\omega_j\| \otimes \omega_i\omega_j.
$$  
Both $\omega_i \qshu \omega_j$ and the letter $\|\omega_i\omega_j\|$ are group-like elements. Therefore we consider words of length two to be {\sl{quasi-primitive}}. 
\end{rmk}

\begin{rmk}\rm
The coproduct $\Gamma$ is \textsl{internal} in the sense that we have:
\begin{equation}
	\Gamma(\Cal H^{\Omega}_{(\omega)})\subset\Cal H^{\Omega}_{(\omega)}\otimes\Cal H^{\Omega}_{(\omega)}
\end{equation}
for any letter $\omega\in\Omega$, where $\Cal H^{\Omega}_{(\omega)}\subset\Cal H^{\Omega}$ is the linear span of words of weight $\omega$.
\end{rmk}

The bialgebra $\Cal H^{\Omega}$ is pointed \cite{radford}. Its coradical $\Cal H^{\Omega}_0$ is the subalgebra of $\Cal H^{\Omega}$ (with respect to the quasi-shuffle product $\,\qshu$) generated by the letters $\omega\in\Omega$. It is also the linear span of $G$, where $G$ is the commutative monoid of nonzero group-like elements in $\Cal H^{\Omega}$. Let $\wt G$ be the abelian group associated with $G$, in which $G$ embeds canonically. Namely,
$$
	\wt G=G\times G/\sim,
$$
where $(g,h)\sim(g',h')$ if and only if there exists $k\in G$ with $g'=g\qshu k$ and $h'=h\qshu k$. The embedding $\iota:G\to\wt G$ is given by $\iota(g)=(g,\un)$. The product in $\wt G$ is induced by the diagonal product in $G\times G$, the unit is given by the class of $(g,g)$ for any $g\in G$, and the inverse of the class of $(g,h)$ is given by the class of $(h,g)$.\\

\noindent Let $\wt {\Cal H}^{\Omega}_0$ be the linear span of $\wt G$, and let $\wt {\Cal H}^{\Omega}$ be the bialgebra defined by:
$$
	\wt{\Cal H}^{\Omega}:=\Cal H^{\Omega}\otimes_{\Cal H^{\Omega}_0}\wt{\Cal H}^{\Omega}_0.
$$
Following \cite[Lemma 7.6.2]{radford} we deduce that $\wt{\Cal H}^{\Omega}$ is a Hopf algebra, obtained from $\Cal H^{\Omega}$ by formally inverting all group-like elements of $\Cal H^{\Omega}$. The antipode of $\wt{\Cal H}^{\Omega}$ can be computed as follows from its defining equations $S \star \mop{Id} = \un \varepsilon = \mop{Id} \star S$: together with $S(\omega)=\omega^{-1}$ for any letter $\omega\in\Omega\subset G\subset\wt G$, this implies for words of length two
$$
	S(\omega'\omega'') \qshu \omega' \qshu \omega'' + S(\|\omega'\omega''\|) \qshu \omega' \omega'' = 0, 
$$
from which we deduce that 
$$
	S(\omega'\omega'') = - {\omega'}^{-1} \qshu {\omega''}^{-1} \qshu \|\omega'\omega''\|^{-1} \qshu \omega' \omega''  .
$$
For words of length three we obtain
\allowdisplaybreaks
\begin{align*}
	S(\omega'\omega''\omega''') 
	&= -{\omega' }^{-1}\qshu {\omega''}^{-1}\qshu {\omega'''}^{-1}
			{\,\qshu}\Big( S(\|\omega'\omega''\|\omega''') \qshu \omega'\omega'' \qshu \omega'''
			+ S(\|\omega'\omega''\|\omega''') \qshu \omega'\qshu \omega''\omega''' \\
	&\qquad	+ S(\|\omega'\omega''\omega'''\|) \qshu \omega'\omega''\omega''' \Big) \\
	&= {\omega' }^{-1}\qshu {\omega''}^{-1}\qshu {\omega'''}^{-1}\qshu \|\omega'\omega''\omega'''\|^{-1}
			{\,\qshu}\Big({\|\omega'\omega''\|}^{-1}\qshu \|\omega'\omega'' \|\omega''' \qshu \omega'\omega'' \\
	&\qquad	+ {\|\omega''\omega'''\|}^{-1} \qshu \omega' \|\omega''\omega'''\|\qshu \omega''\omega'''
			+ \omega'\omega''\omega'''\Big).
\end{align*}

\section{Another approach via weak quasi-shuffles}
\label{sect:wqsh}

In the following we introduce the notion of weak quasi-shuffles. We then present an alternative, more pedestrian, proof of Theorem \ref{coprod-c}.

\subsection{Weak quasi-shuffles}
\label{ssect:weak}

We denote by $\mop{wqsh}(p,q;r)$ the set of \textsl{weak $(p,q)$-quasi-shuffles of type $r$}, i.e., surjective maps:
$$
	\sigma:\{1,\ldots,p+q\} \fleche 8\hskip -5.8mm\fleche 6 \{1,\ldots,p+q-r\}
$$
subject to the conditions $\sigma_1 \le \cdots \le \sigma_p$ and $\sigma_{p+1} \le \cdots \le \sigma_{p+q}$. We will denote by $\mop{qsh}(p,q)$ the set of all $(p,q)$-quasi-shuffles of any type, and $\mop{wqsh}(p,q)$ accordingly for weak quasi-shuffles.

\begin{lem}\label{wqsh}
Let $p$ and $q$ be two non-negative integers. Any weak $(p,q)$-quasi-shuffle $\varphi:\{1,\ldots,p+q\}\to\hskip -9.3pt\to \{1,\ldots,s\}$ (of type $p+q-s)$ admits a unique decomposition:
$$
	\varphi=\delta\circ\sigma,
$$
where:
\begin{itemize}
\item
	$\sigma$ is a nondecreasing surjection: $\{1,\ldots,p+q\}\to\hskip -9.3pt\to \{1,\ldots,t\}$ 
	for some $t\le p+q$\\ with $t\ge 2$, such that $\sigma_p<\sigma_{p+1}$,\\[-0.3cm]
\item
	$\delta:\{1,\ldots,t\}\to\hskip -9.3pt\to \{1,\ldots,s\}$ is a $(\sigma_p,t-\sigma_p)$-quasi-shuffle of type $r=t-s$.
\end{itemize}
\end{lem}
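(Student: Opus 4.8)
The plan is to extract from a weak quasi-shuffle $\varphi$ its "underlying nondecreasing surjection" and show that the remaining data is exactly a quasi-shuffle. Concretely, given $\varphi:\{1,\ldots,p+q\}\to\hskip -9.3pt\to\{1,\ldots,s\}$ with $\varphi_1\le\cdots\le\varphi_p$ and $\varphi_{p+1}\le\cdots\le\varphi_{p+q}$, I would first define $\sigma$ to be the unique nondecreasing surjection onto a final segment $\{1,\ldots,t\}$ that records \emph{which blocks of equal consecutive values already present inside the first word and inside the second word} get merged, but \emph{not} any merging across the two words; equivalently, $\sigma$ is obtained from $\varphi$ by collapsing maximal runs of constancy within $\{1,\ldots,p\}$ and within $\{p+1,\ldots,p+q\}$ to single values, and then relabelling the images $1,\ldots,t$ in increasing order. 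By construction $\sigma_1\le\cdots\le\sigma_p<\sigma_{p+1}\le\cdots\le\sigma_{p+q}$, the strict inequality $\sigma_p<\sigma_{p+1}$ being forced because no identification between the two halves has been performed; and $t\ge 2$ since $p,q\ge 1$ (the degenerate cases $p=0$ or $q=0$ must be treated separately, where the statement is vacuous or trivial). The map $\sigma$ has image $\{1,\ldots,t\}$ with $t\le p+q$.

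Next I would produce $\delta$. Since $\sigma_p<\sigma_{p+1}$ and $\sigma$ is surjective onto $\{1,\ldots,t\}$, the value $\sigma_p$ is some integer, say $\sigma_p=a$, so that $\{1,\ldots,a\}=\sigma(\{1,\ldots,p\})$ and $\{a+1,\ldots,t\}=\sigma(\{p+1,\ldots,p+q\})$; moreover $\sigma$ restricted to $\{1,\ldots,p\}$ and to $\{p+1,\ldots,p+q\}$ is nondecreasing and surjective onto these two segments. I want $\delta:\{1,\ldots,t\}\to\hskip -9.3pt\to\{1,\ldots,s\}$ to be defined by the requirement $\delta\circ\sigma=\varphi$. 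This determines $\delta$ on each value of $\sigma$, and it is well defined \emph{precisely because} $\varphi$ is constant on the runs that $\sigma$ collapsed (that is exactly how $\sigma$ was built). One then checks that $\delta$ restricted to $\{1,\ldots,a\}$ is increasing and restricted to $\{a+1,\ldots,t\}$ is increasing: indeed if two values $u<u'$ in $\{1,\ldots,a\}$ had $\delta(u)=\delta(u')$, pull back to indices $i<i'$ in $\{1,\ldots,p\}$ with $\sigma_i=u,\ \sigma_{i'}=u'$, whence $\varphi_i=\varphi_{i'}$ and, since $\varphi$ is nondecreasing on the first word, $\varphi$ is constant on $\{i,\ldots,i'\}$ — but then $\sigma$ would have collapsed $u$ and $u'$, a contradiction. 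Surjectivity of $\delta$ follows from surjectivity of $\varphi$. Hence $\delta$ is a $(a,t-a)$-quasi-shuffle of type $r=t-s$, i.e. $\delta\in\mop{qsh}(\sigma_p,t-\sigma_p;t-s)$, as required.

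For uniqueness, suppose $\varphi=\delta'\circ\sigma'$ is another such factorization. Then $\sigma'$ is a nondecreasing surjection with $\sigma'_p<\sigma'_{p+1}$ and $\delta'$ is injective on $\{1,\ldots,\sigma'_p\}$ and on $\{\sigma'_p+1,\ldots\}$; this injectivity of $\delta'$ on each half means that on each half $\varphi=\delta'\circ\sigma'$ collapses exactly the same runs as $\sigma'$ does, namely the maximal runs of constancy of $\varphi$ — so $\sigma'$ must have the same fibres as the canonical $\sigma$ on each half, and since both are nondecreasing surjections onto initial segments, $\sigma'=\sigma$; then $\delta'\circ\sigma=\delta\circ\sigma$ with $\sigma$ surjective forces $\delta'=\delta$.

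The main obstacle, and the point deserving care, is the bookkeeping at the junction $p\mid p+1$: one has to argue cleanly that no cross-word identification is ever recorded in $\sigma$ (so that $\sigma_p<\sigma_{p+1}$ genuinely holds even when $\varphi_p=\varphi_{p+1}$, which is allowed for a \emph{weak} quasi-shuffle), while all such cross-word identifications are pushed into $\delta$, where they are exactly the "type $r>0$" part of an ordinary quasi-shuffle. Everything else is a routine verification that the fibre partition of $\varphi$ on each half refines, in a canonical way, through $\sigma$ and then $\delta$.
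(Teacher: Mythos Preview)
Your proof is correct and follows essentially the same construction as the paper's: you build $\sigma$ first (by collapsing the level sets of $\varphi$ on each half separately and relabelling) and then read off $\delta$ from $\varphi=\delta\circ\sigma$, whereas the paper lists the distinct values of $\varphi$ on each half to define $\delta$ first and then recovers $\sigma$; the resulting factorization is identical. Your treatment is in fact more thorough than the paper's, which dispatches uniqueness as ``manifest'' while you spell out why the strict monotonicity of $\delta'$ on each block forces $\sigma'$ to have the same fibres as the canonical $\sigma$.
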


\begin{proof}
The nondecreasing surjection $\sigma$ and the quasi-shuffle $\delta$ are defined as follows: let $t$ be the sum $t'+t''$ where $t'$ (resp.~$t''$) stands for the number of values of the restriction of $\varphi$ to $\{1,\ldots,p\}$, resp.~$\{p+1,\ldots,p+q\}$. Let us denote by $\delta_1<\cdots<\delta_{t'}$, resp.~$\delta_{t'+1}<\cdots<\delta_{t''}$, the values reached by $\varphi_1,\ldots,\varphi_p$, resp.~$\varphi_{p+1},\ldots,\varphi_{p+q}$. The surjection $\delta:\{1,\ldots,t'+t''\}\to\hskip -9.3pt\to \{1,\ldots,s\}$ thus defined is a quasi-shuffle by definition. Then $\sigma_j:=k$ if $\varphi_j=\delta_k$ for $j\in\{1,\ldots,p\}$, and $\sigma_j:=t'+k$ if $\varphi_j=\delta_k$ for $j\in\{p+1,\ldots,p+q\}$. The decomposition is manifestly unique.
\end{proof}

\begin{rmk}
{\rm{
A \textsl{packed word} is a word on the alphabet $\mathbb N$ such that the set of letters appearing in the word is exactly $\{1,\ldots,s\}$ for some $s\in\mathbb N$. The set of surjections from $\{1,\ldots,n\}$ onto $\{1,\ldots,s\}$ is in canonical bijection with packed words of $n$ letters on the alphabet $\{1,\ldots,s\}$: for example the packed word $13224$ stands for the surjection from $\{1,2,3,4,5\}$ onto $\{1,2,3,4\}$ which sends $1$ to $1$, $2$ to $3$, $3$ to $2$, $4$ to $2$ and $5$ to $4$. The standardization of any word $w$ (packed or not) is the unique permutation $\sigma$ such that $w_i< w_j$ or ($w_i=w_j$ and $i<j$) implies $\sigma_i<\sigma_j$. For example, $\mop{Std}(13224)=14235$. With this interpretation at hand, given a weak $(p,q)$-quasi-shuffle $\varphi=\delta\circ\sigma$, the packed word of $\delta$ is obtained by erasing the repetitions of letters in $\varphi_1\ldots\varphi_p$ and $\varphi_{p+1}\ldots\varphi_{p+q}$. The packed word of $\sigma$ is obtained from $\varphi_1\ldots\varphi_{p+q}$ by packing both blocks $\varphi_1\ldots\varphi_p$ and $\varphi_{p+1}\ldots\varphi_{p+q}$, followed by shifting the second block by the maximum of the first. For example, for $\varphi=1224\big\vert 113$ we have $\delta=124\big\vert 13$ and $\sigma=1223\big\vert445$.}}
\end{rmk}

\noindent Now let $\varphi:\{1,\ldots,p+q\}\to\hskip -9.3pt\to\{1,\ldots,s\}$ be a weak quasi-shuffle (of type $p+q-s$). Let us consider the following set of $(p,q)$-quasi-shuffles associated to $\varphi$:
\begin{eqnarray*}
	\mop{qsh}\nolimits_\varphi(p,q)
	:=\big\{\eta\in\mop{qsh}(p,q),\, 
		&1) &\varphi(a)<\varphi(b)\Rightarrow \eta(a)<\eta(b)\hbox{ for any }a,b\in\{1,\ldots,p+q\},\\
		&2) &\varphi \hbox{ factorizes through }\eta\big\}.
\end{eqnarray*}

\begin{prop}\label{wqsh-bis}
Let $\varphi:\{1,\ldots,p+q\}\to\hskip -9.3pt\to\{1,\ldots,s\}$ be a weak $(p,q)$-quasi-shuffle (of type $p+q-s$), and let $\eta:\{1,\ldots,p+q\}\to\hskip -9.3pt\to\{1,\ldots,t'\}$ be a quasi-shuffle (of type $p+q-t'$) in $\mop{qsh}\nolimits_\varphi(p,q)$. There exists a unique non-decreasing surjection $\sigma[\eta]:\{1,\ldots,t'\}\to\hskip -9.3pt\to\{1,\ldots,s\}$ such that $\varphi=\sigma[\eta]\circ\eta$, and any factorization of $\varphi$ as a composition of a nondecreasing surjection with a quasi-shuffle (in that order) arises this way.
\end{prop}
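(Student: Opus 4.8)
The plan is to dispose of the two assertions in turn, the second being almost a restatement of the first. For the existence and uniqueness of $\sigma[\eta]$, I would start from the surjectivity of $\eta$: any map defined on $\{1,\ldots,t'\}$ is completely determined by its precomposition with $\eta$, so if a map $\sigma[\eta]$ with $\varphi=\sigma[\eta]\circ\eta$ exists at all, it is unique. Existence is exactly condition $(2)$ in the definition of $\mop{qsh}\nolimits_\varphi(p,q)$, which says precisely that $\varphi$ factorizes through $\eta$. The image of $\sigma[\eta]$ is then $\varphi(\{1,\ldots,p+q\})=\{1,\ldots,s\}$, so it is indeed a surjection onto $\{1,\ldots,s\}$.

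The only point with any content is that $\sigma[\eta]$ is non-decreasing, and this is where condition $(1)$ is used. Given $1\le k<k'\le t'$, I would choose preimages $a\in\eta^{-1}(k)$ and $b\in\eta^{-1}(k')$, which are non-empty by surjectivity of $\eta$. Were $\sigma[\eta](k)>\sigma[\eta](k')$, that is $\varphi(b)<\varphi(a)$, then condition $(1)$ applied to the pair $(b,a)$ would give $\eta(b)<\eta(a)$, i.e. $k'<k$, a contradiction; hence $\sigma[\eta](k)\le\sigma[\eta](k')$, so $\sigma[\eta]$ is order-preserving.

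For the last clause, suppose $\varphi=\sigma\circ\eta$ with $\sigma$ a non-decreasing surjection and $\eta$ a $(p,q)$-quasi-shuffle. Then condition $(2)$ holds by construction, and condition $(1)$ holds because $\varphi(a)<\varphi(b)$ forces $\sigma(\eta(a))<\sigma(\eta(b))$, which for a non-decreasing $\sigma$ is possible only when $\eta(a)<\eta(b)$. Thus $\eta\in\mop{qsh}\nolimits_\varphi(p,q)$, and $\sigma=\sigma[\eta]$ by the uniqueness already established.

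I do not expect a genuine obstacle here: the entire argument is the bookkeeping observation that condition $(1)$ in the definition of $\mop{qsh}\nolimits_\varphi(p,q)$ is exactly what is needed for the induced quotient map to be monotone, and conversely that monotonicity of $\sigma$ forces condition $(1)$; everything else is surjectivity-driven uniqueness. The only place to be mildly careful is the direction of the implication in condition $(1)$: it is used contrapositively and with the two arguments exchanged. One may also note, although it is not strictly needed, that condition $(1)$ already implies condition $(2)$ (taking $\eta(a)=\eta(b)$ forces $\varphi(a)=\varphi(b)$), so the two conditions defining $\mop{qsh}\nolimits_\varphi(p,q)$ are not independent.
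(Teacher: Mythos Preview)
Your proof is correct and follows the same line as the paper's, only spelled out in greater detail: the paper simply notes that $\varphi$ factorizes through $\eta$ by condition~2), and that the resulting surjection $\sigma[\eta]$ is nondecreasing if and only if condition~1) holds. Your contrapositive argument for monotonicity and your explicit verification of the converse direction are exactly what underlies the paper's ``if and only if'', and your side remark that condition~1) already implies condition~2) is a nice observation not made in the paper.
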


\begin{proof}
The weak quasi-shuffle $\varphi$ factorizes through any $\eta\in\mop{qsh}_\varphi(p,q)$. The unique surjection $\sigma[\eta]:\{1,\ldots,t'\}\to\hskip -9.3pt\to\{1,\ldots,s\}$ thus defined is obviously nondecreasing if and only if the order condition 1) is verified.
\end{proof}
For example, for $\varphi=1224\big\vert 113$ we display the elements $\eta\in\mop{qsh}\nolimits_\varphi(4,3)$ and the corresponding nondecreasing surjections $\sigma[\eta]$:
\begin{align*}
&\eta & 1457\big\vert 236 && 2457\big\vert 136 && 3457\big\vert 126 && 1346\big\vert 125 && 2346\big\vert 125\\
&\sigma[\eta] & 1112234 && 1112234 && 1112234 && 112234 && 112234
\end{align*}

Let us describe all possible factorizations on the more elaborate example $\varphi=1224\big\vert 112334$, where we have $p=4$, $q=6$ and $s=4$: the standardization of $\varphi$ is $\mop{Std}\varphi=1459\big\vert 23678A$, where $A$ stands for $10$. Any element of $\mop{qsh}_\varphi(4,6)$ is obtained from $\mop{Std}\varphi$ by quasi-shuffling each preimage and concatenating, namely
\begin{itemize}

\item display $\mop{Std}\varphi$ according to the $\varphi$-preimages:
$$
	1\big\vert 23\hskip 12mm 45\big\vert 6\hskip 12mm \big\vert 78\hskip 12mm 9\big\vert A.
$$

\item Choose $s=4$ quasi-shuffles in $\mop{qsh}(1,2)$, $\mop{qsh}(2,1)$, $\mop{qsh}(0,2)$ and $\mop{qsh}(1,1)$ respectively, for example $\eta_1=2\big\vert 12$, $\eta_2=12\big\vert 1$, $\eta_3=\big\vert 12$ and $\eta_4=2\big\vert 1$. Note there is of course no freedom for choosing $\eta_3$.

\item Concatenate the $\eta_i$'s, which gives $\eta=2\, 34\ 8\big\vert 12\, 3\, 56\, 7$ here.

\end{itemize}

The nondecreasing surjection $\sigma[\eta]$ is then given by:
$$
	\sigma[\eta]=\underbrace{1\cdots 1}_{c_1}\underbrace{2\cdots 2}_{c_2}\cdots \underbrace{s\cdots s}_{c_s}\, ,
$$
where $c_j$ is the cardinality of the image of the quasi-shuffle $\eta_j$. In the example above, $\sigma[\eta]=11223344$. The cardinality of $\mop{qsh}_\varphi(4,6)$ for $\varphi=1224\big\vert 112334$ is equal to $5\times 5\times 1\times 3=75$. \\

\noindent Lemma \ref{wqsh} and Proposition \ref{wqsh-bis} can be visualized by the following diagram:
\diagramme{
\xymatrix{
&\{1,\ldots,p+q\}\ar@{->>}[dl]_{\eta\in\smop{qsh}_\varphi(p,q)}\ar@{->>}[dr]^{\hskip 3mm\sigma\,\sctos \atop \sigma_p<\sigma_{p+1}}\ar@{->>}[dd]^{\varphi} &\\
\{1,\ldots,t'\}\ar@{->>}[dr]_{\hskip -3mm\sigma[\eta]\,\sctos} && \{1,\ldots, t\}\ar@{->>}[dl]^{\hskip 4mm\delta\in\smop{qsh}(\sigma_p,\,t-\sigma_p)}\\
&\{1,\ldots,s\}&
}
}
\noindent Here $\scto$ indicates a nondecreasing surjection. The right wing displays the unique factorization in  Lemma \ref{wqsh} and left wing represents the factorization in Proposition \ref{wqsh-bis}.

\subsection{A pedestrian proof of Theorem \ref{coprod-c} via weak quasi-shuffles}
\label{sect:main}

Let us check the compatibility of the internal coproduct $\Gamma$ with quasi-shuffle for two length one words,  i.e., for the quasi-shuffle product of two letters $\omega_1,\omega_2 \in \Omega$:
\allowdisplaybreaks
\begin{eqnarray}
	\Gamma(\omega_1\qshu \omega_2)
	&=&\Gamma(\omega_1\omega_2+\omega_2\omega_1+[\omega_1+\omega_2]) \nonumber\\
	&=&\omega_1\omega_2\otimes(\omega_1\qshu \omega_2) +[\omega_1+\omega_2]\otimes \omega_1\omega_2
		+\omega_2\omega_1\otimes (\omega_2\qshu \omega_1) 
		+[\omega_2+\omega_1]\otimes \omega_2\omega_1 \nonumber\\
	&&\quad	+[\omega_1+\omega_2]\otimes[\omega_1+\omega_2] \nonumber\\
	&=&(\omega_1\qshu \omega_2)\otimes(\omega_1 \qshu \omega_2) \label{group-like}\\
	&=&\Gamma(\omega_1)\qshu\Gamma(\omega_2). \nonumber
\end{eqnarray}
The reader is invited to check the less obvious case of two words $\omega_1\omega_2$ and $\omega_3$ of lengths two and one, respectively. The general case could certainly be handled by induction on the sum of the lengths of the two words involved, but we give here a direct proof based on the notion of weak quasi-shuffle defined in Section \ref{sect:wqsh}. Let us introduce some more notations: for any word $\bm\omega=\omega_1\cdots\omega_n$ and for any surjection $\sigma:\{1,\ldots, n\} \to \hskip -9.3pt\to \{1,\ldots,s\}$ with $s \le n$, we will denote by $\bm\omega^\sigma$ the word $\omega^\sigma_1\cdots\omega^\sigma_s$ with:
$$
	\omega^\sigma_k:=\Big[\sum_{\sigma_j=k}\omega_j\Big],
$$
thus extending to any surjection the notation introduced for quasi-shuffles in the beginning of this section. If $\tau:\{1,\ldots,s\}\to\hskip -9.3pt\to\{1,\ldots,p\}$ is another surjection, we obviously have:
\begin{equation}\label{surj-comp}
	(\bm\omega^\sigma)^\tau=\bm\omega^{\tau\circ\sigma}.
\end{equation}
We also introduce
$$
	\bm\omega_\sigma^k := \omega_{j_1}\cdots\omega_{j_{r(k)}},
$$ 
where $j_1,\ldots,j_{r(k)}$ are the $\sigma$-preimages of $k$ arranged in increasing order. Hence we have:
\begin{equation}
	\bm\omega^\sigma=||\bm\omega_\sigma^1||\cdots||\bm\omega_\sigma^s||.
\end{equation}
\ignore{For any pair $\sigma,\tau$ of surjective maps of domain $\{1,\ldots,n\}$, we say that $\tau$ is a \textsl{$\sigma$-quasi-shuffle} if $\tau$ is (strictly) increasing on each block $\sigma^{-1}(k)$ for any $k$ in the range of $\sigma$.} We are now ready to give another expression for the decomposition coproduct of a word $\bm\omega=\omega_1\cdots\omega_n$:
\begin{equation}
	\Gamma(\bm\omega)
	=\sum_{s\ge 1}\sum_{\sigma:\{1,\ldots,n\}\scto\{1,\ldots,s\}} 
	\bm\omega^\sigma\otimes(\bm\omega_\sigma^1\qshu\cdots\qshu\bm\omega_\sigma^s).
	\ignore{&=&\sum_{s\ge 1}\sum_{\sigma:\{1,\ldots,n\}\scto\{1,\ldots,s\}}
	\sum_{\tau\in\smop{qsh}(\sigma)} \bm\omega^\sigma\otimes\bm\omega^\tau}
\end{equation}
Recall that $\scto$ indicates a nondecreasing surjection. Let us now consider two words $\bm\omega'=\omega_1\cdots\omega_p$ and $\bm\omega''=\omega_{p+1}\cdots\omega_{p+q}$, and compute (with $\bm\omega:=\bm\omega'.\bm\omega''$):
\begin{eqnarray*}
	\Gamma(\bm\omega'\qshu\bm\omega'')
	&=&\sum_{r\ge 0}\sum_{\eta\in\smop{qsh}(p,q;r)} \Gamma(\bm\omega^\eta)\\
	&=&\sum_{r\ge 0}\sum_{\eta\in\smop{qsh}(p,q;r)} \sum_{s\ge 1}\sum_{\wt\sigma:\{1,\ldots,p+q-r\}\scto\{1,\ldots,s\}}
	\bm\omega^{\wt\sigma\circ\eta}\otimes\big((\bm\omega^\eta)_{\wt\sigma}^1
	\qshu\cdots\qshu(\bm\omega^\eta)_{\wt\sigma}^s\big).
\end{eqnarray*}
Now remark that the surjections $\wt\sigma\circ\eta$ above are weak quasi-shuffles of type $p+q-s$. Hence we can gather the terms corresponding to the same weak quasi-shuffle, which yields, according to Proposition \ref{wqsh-bis}: 
\begin{equation}
	\Gamma(\bm\omega'\qshu\bm\omega'')=
	\sum_{s\ge 1}\sum_{\varphi\in\smop{wqsh}(p,q; p+q-s)}\bm\omega^{\varphi}
	\otimes\left(\sum_{r\ge 0}\sum_{\eta\in\smop{qsh}_\varphi(p,q;r)}
	(\bm\omega^\eta)_{\sigma[\eta]}^1\qshu\cdots\qshu(\bm\omega^\eta)_{\sigma[\eta]}^s\right).
\end{equation}
On the other hand, using Lemma \ref{wqsh} we get:
\allowdisplaybreaks
\begin{eqnarray*}
	\lefteqn{\Gamma(\bm\omega')\qshu\Gamma(\bm\omega'')}\\
	&=& \sum_{t'\ge 1 \atop t''\ge 1}\sum_{\sigma':\{1,\ldots,p\}\sctos\{1,\ldots,t'\} \atop \sigma'':\{p+1,\ldots,p+q\}\sctos\{1,\ldots,t''\}}{\ignore{\sum_{t''\ge 1}
	\sum_{\sigma'':\{p+1,\ldots,p+q\}\scto\{1,\ldots,t''\}}}}({\bm\omega'}^{\sigma'}\qshu{\bm\omega''}^{\sigma''})\otimes
	({\bm\omega'}_{\sigma'}^1\qshu\cdots\qshu{\bm\omega'}_{\sigma'}^{t'}\qshu{\bm\omega''}_{\sigma''}^1\qshu\cdots	\qshu{\bm\omega''}_{\sigma''}^{t''})\\
	&=& \sum_{t\ge 2} \sum_{\sigma:\{1,\ldots,p+q\}\sctos\{1,\ldots,t\} \atop \sigma_{p}<\sigma_{p+1}}
	\sum_{r\ge 0}\sum_{\delta\in\smop{qsh}(\sigma_p,t-\sigma_p;r)}
	\bm\omega^{\delta\circ\sigma}\otimes({\bm\omega}_{\sigma}^1\qshu\cdots\qshu{\bm\omega}_{\sigma}^{t})\\
	&=& \sum_{s\ge 1}\sum_{\varphi\in\smop{wqsh}(p,q;p+q- s)}\bm\omega^{\varphi}\otimes({\bm\omega}_{\sigma}^1
	\qshu\cdots\qshu{\bm\omega}_{\sigma}^{t}),
\end{eqnarray*}
where $\sigma:\{1,\ldots,p+q\}\! \scto\ \{1,\ldots,t\}$ is the increasing component of $\varphi$ in the decomposition given by Lemma \ref{wqsh}. Hence, compatibility of the internal coproduct $\Gamma$ with the quasi-shuffle product will immediately stem from the following lemma:

\begin{lem}\label{paquets}
For any weak $(p,q)$-quasi-shuffle $\varphi=\delta\circ\sigma:\{1,\ldots,p+q\}\to\hskip -9.3pt\to\{1,\ldots,s\}$, where $\sigma:\{1,\ldots,p+q\}\cto \{1,\ldots,t\}$ and $\delta: \{1,\ldots,t\}\to\hskip -9.3pt\to \{1,\ldots,s\}$ are the two components of $\varphi$ given by Lemma \ref{wqsh}, we have:\rm
\begin{equation}\label{intermediaire}
	\sum_{\eta\in\smop{qsh}_{\varphi}(p,q)}
	(\bm\omega^\eta)_{\sigma[\eta]}^1\qshu\cdots\qshu(\bm\omega^\eta)_{\sigma[\eta]}^s
	={\bm\omega}_{\sigma}^1 \qshu\cdots\qshu{\bm\omega}_{\sigma}^{t}.
\end{equation}
\end{lem}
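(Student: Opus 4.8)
The plan is to prove \eqref{intermediaire} by a purely fibrewise bookkeeping along $\varphi$, after which everything reduces to associativity, commutativity and multilinearity of the iterated quasi-shuffle product. Write $\bm\omega=\bm\omega'.\bm\omega''$ with $|\bm\omega'|=p$ and $|\bm\omega''|=q$, and for each $i\in\{1,\ldots,s\}$ let $\bm\alpha^i$ be the (consecutive) subword of $\bm\omega'$ carried by $\varphi^{-1}(i)\cap\{1,\ldots,p\}$ and $\bm\beta^i$ the (consecutive) subword of $\bm\omega''$ carried by $\varphi^{-1}(i)\cap\{p+1,\ldots,p+q\}$; set $a_i=|\bm\alpha^i|$ and $b_i=|\bm\beta^i|$, so that $a_i+b_i\ge 1$ by surjectivity of $\varphi$.

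First I would rewrite the right-hand side. The $\sigma$-fibres contained in $\{1,\ldots,p\}$, resp.~in $\{p+1,\ldots,p+q\}$, are precisely the fibres of the restriction of $\varphi$ to that block, so, by the construction of $\sigma$ in the proof of Lemma \ref{wqsh}, the components $\bm\omega_\sigma^1,\ldots,\bm\omega_\sigma^{t}$ are, in this order, the nonempty $\bm\alpha^i$ (for increasing $i$) followed by the nonempty $\bm\beta^i$ (for increasing $i$). Since $\qshu$ is associative and commutative with unit $\un$, this gives
$$
	\bm\omega_\sigma^1\qshu\cdots\qshu\bm\omega_\sigma^{t}
	=(\bm\alpha^1\qshu\cdots\qshu\bm\alpha^s)\qshu(\bm\beta^1\qshu\cdots\qshu\bm\beta^s)
	=(\bm\alpha^1\qshu\bm\beta^1)\qshu\cdots\qshu(\bm\alpha^s\qshu\bm\beta^s).
$$

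Next I would rewrite the left-hand side. By Proposition \ref{wqsh-bis} together with the explicit description of $\mop{qsh}_\varphi(p,q)$ given after it, an element $\eta\in\mop{qsh}_\varphi(p,q)$ is the same datum as the choice, for every $i$, of an $(a_i,b_i)$-quasi-shuffle $\eta_i$ quasi-shuffling the block $\bm\alpha^i$ against the block $\bm\beta^i$ inside the $i$-th $\varphi$-fibre; writing $c_i$ for the cardinality of the image of $\eta_i$, the surjection $\eta$ is the juxtaposition of the $\eta_i$ and $\sigma[\eta]$ is the nondecreasing surjection with successive blocks of sizes $c_1,\ldots,c_s$. Unravelling the definitions of $\bm\omega^\eta$ and of the componentwise notation, one checks that $(\bm\omega^\eta)_{\sigma[\eta]}^i$ is exactly the term of the quasi-shuffle product $\bm\alpha^i\qshu\bm\beta^i$ labelled by $\eta_i$: both words have length $c_i$, and their $\ell$-th letter is $[\sum_j\omega_j]$, the sum running over the indices $j$ of the $i$-th $\varphi$-fibre that $\eta_i$ sends to $\ell$. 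Summing over $\eta$ then rewrites the left-hand side of \eqref{intermediaire} as the sum, over all tuples $(\eta_1,\ldots,\eta_s)$ of quasi-shuffles, of the iterated quasi-shuffle product of the $s$ words obtained by selecting in each $\bm\alpha^i\qshu\bm\beta^i$ the term labelled by $\eta_i$.

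The last step is then immediate: the $s$-fold quasi-shuffle product is multilinear, so this tuple-sum may be carried out slot by slot, and for each $i$ the sum over all $(a_i,b_i)$-quasi-shuffles $\eta_i$ of the associated term of $\bm\alpha^i\qshu\bm\beta^i$ is, by the very definition \eqref{quasi-shuffle}, the full product $\bm\alpha^i\qshu\bm\beta^i$. Hence the left-hand side equals $(\bm\alpha^1\qshu\bm\beta^1)\qshu\cdots\qshu(\bm\alpha^s\qshu\bm\beta^s)$, which is the value computed above for the right-hand side; this proves \eqref{intermediaire}, and with it the compatibility of $\Gamma$ with the quasi-shuffle product. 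The one genuinely delicate point is the blockwise identification of $(\bm\omega^\eta)_{\sigma[\eta]}^i$ with the $\eta_i$-term of $\bm\alpha^i\qshu\bm\beta^i$ — i.e.~reading off that an element of $\mop{qsh}_\varphi(p,q)$ splits into independent fibrewise quasi-shuffles and tracking the two components through $\bm\omega^\eta$ and through $\sigma[\eta]$; everything else is formal. Minor care is needed for a fibre $i$ with $\bm\alpha^i=\un$ or $\bm\beta^i=\un$, but then $\mop{qsh}(a_i,b_i)$ is a singleton and $\bm\alpha^i\qshu\bm\beta^i$ reduces to a single word, so no exception arises; one should also verify that the reindexing implicit in $\sigma[\eta]$ matches the conventions of Lemma \ref{wqsh} and Proposition \ref{wqsh-bis}.
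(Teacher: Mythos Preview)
Your proof is correct and follows essentially the same route as the paper's: both arguments regroup the right-hand side fibrewise over $\varphi$ as $(\bm\alpha^1\qshu\bm\beta^1)\qshu\cdots\qshu(\bm\alpha^s\qshu\bm\beta^s)$ (the paper writes this as $A_1\qshu\cdots\qshu A_s$ with $A_j$ the quasi-shuffle of the one or two $\sigma$-blocks lying over $j$), and then invoke the explicit description of $\mop{qsh}_\varphi(p,q)$ as independent fibrewise quasi-shuffles to identify the left-hand side with the expansion of that same product by multilinearity. Your version is somewhat more explicit about the bookkeeping (the notation $\bm\alpha^i,\bm\beta^i$ and the verification that $(\bm\omega^\eta)_{\sigma[\eta]}^i$ is the $\eta_i$-term of $\bm\alpha^i\qshu\bm\beta^i$), but the substance is the same.
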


\begin{proof}
The right-hand side of \eqref{intermediaire} can also be written as follows:
\begin{equation}\label{rhs-intermediaire}
	R={\bm\omega}_{\sigma}^1\qshu\cdots\qshu{\bm\omega}_{\sigma}^{t}=A_1\qshu\cdots\qshu A_s,
\end{equation}
where, for any $j\in\{1,\ldots,s\}$, the $j^{\smop{th}}$ term in the product above in given by:
\begin{equation}\label{aj}
	A_j=\ \mathop{\scalebox{1.6}{$\ \qshu$}}\limits_{\delta_\ell=j}\ \bm\omega_\sigma^\ell. 
\end{equation}
It is equal to one single word $\bm\omega_\sigma^{\ell}$ or to the quasi-shuffle $\bm\omega_\sigma^{\ell}\qshu\bm\omega_\sigma^m$, according to whether $\delta^{-1}(j)$ contains one element $\ell$ or two elements $\ell< m$. In the second case, $A_j$ is the sum of all the words obtained by quasi-shuffling the letters of $\bm\omega_\sigma^{\ell}$ with those of $\bm\omega_\sigma^{m}$. We have $\ell\le\sigma_p$ and $m\ge \sigma_{p+1}$, hence the letters of $\bm\omega_\sigma^{\ell}$ are of rank $\le p$, whereas those of $\bm\omega_\sigma^{m}$ are of rank $>p$. Plugging \eqref{aj} inside \eqref{rhs-intermediaire}, we can thus see, in the light of the description of $\mop{qsh}_\varphi(p,q)$ which follows the proof of Proposition \ref{wqsh-bis}, that we have:
\begin{equation}
	{\bm\omega}_{\sigma}^1\qshu\cdots\qshu{\bm\omega}_{\sigma}^{t}
	=\sum_{\eta\in\smop{qsh}_\varphi(p,q)}(\bm\omega^\eta)_{\sigma[\eta]}^1
	\qshu\cdots\qshu(\bm\omega^\eta)_{\sigma[\eta]}^s,
\end{equation}
which proves Lemma \ref{paquets}.
\end{proof}

\noindent This in turn proves the first assertion of Theorem  \ref{coprod-c}. In order to prove coassociativity, let us compute for a word $\bm\omega$ of length $n$:
\allowdisplaybreaks
\begin{eqnarray*}
	\lefteqn{(\Gamma\otimes \mop{Id})\Gamma(\bm\omega)
	=\sum_{s\ge 1}\sum_{\sigma:\{1,\ldots,n\}\scto\{1,\ldots,s\}}(\Gamma\otimes \mop{Id})\big(\bm\omega^\sigma
	\otimes(\bm\omega_\sigma^1\qshu\cdots\qshu\bm\omega_\sigma^s)\big)}\\
	&=&\sum_{s\ge r\ge 1}\sum_{\sigma:\{1,\ldots,n\}\scto\{1,\ldots,s\}}\sum_{\tau:\{1,\ldots,s\}\scto\{1,\ldots,r\}}
	\hskip -6mm\bm\omega^{\tau\circ\sigma}\otimes \big((\bm\omega^\sigma)_\tau^1\qshu\ldots
	\qshu(\bm\omega^\sigma)_\tau^r\big)\otimes(\bm\omega_\sigma^1\qshu\cdots\qshu\bm\omega_\sigma^s).
\end{eqnarray*}
On the other hand, using the compatibility of $\Gamma$ with the quasi-shuffle product, we have:
\allowdisplaybreaks
\begin{eqnarray*}
	\lefteqn{(\mop{Id}\otimes\Gamma)\Gamma(\bm\omega)
	=\sum_{r\ge 1}\sum_{\rho:\{1,\ldots,n\}\scto\{1,\ldots,r\}}(\mop{Id}\otimes\Gamma)\big(\bm\omega^\rho
	\otimes(\bm\omega_\rho^1\qshu\cdots\qshu\bm\omega_\rho^r)\big)}\\
	&=&\sum_{r\ge 1}\sum_{\rho:\{1,\ldots,n\}\scto\{1,\ldots,r\}}\sum_{m_1,\ldots,m_r\ge 1}\ 
	\sum_{\sigma_i:\rho^{-1}(i)\scto {\{1,\ldots,m_i\} \atop i=1,\ldots,r}}
	\hskip -10mm\bm\omega^\rho\otimes\big((\bm\omega_\rho^1)^{\sigma_1}\qshu\cdots\qshu
	(\bm\omega_\rho^r)^{\sigma_r}\big)\\
	&&\hskip 45mm \otimes (\bm\omega_\rho^1)_{\sigma_1}^1\qshu
	\cdots\qshu(\bm\omega_\rho^1)_{\sigma_1}^{m_1}\ \qshu\ \cdots\ \qshu\ 
	(\bm\omega_\rho^r)_{\sigma_r}^1\qshu\cdots\qshu(\bm\omega_\rho^r)_{\sigma_r}^{m_r}.
\end{eqnarray*}
Now, for any $s$ such that $r\le s\le n$, choosing $m_1,\ldots,m_r$ with $m_1+\cdots+m_r=s$ amounts to choosing a nondecreasing surjection $\tau:\{1,\ldots , s\}\cto\{1,\ldots,r\}$, and the surjections $(\sigma_i)_{i=1,\ldots,r}$ concatenate together to give a nondecreasing surjection $\sigma:\{1,\ldots,n\}\cto\{1,\ldots,s\}$ such that $\rho=\tau\circ\sigma$. Conversely, any pair $(\sigma,\tau)$ arises this way, the $\sigma_i$'s being the restriction of $\sigma$ to the preimages $\rho^{-1}(i),\,i=1,\ldots,r$. Hence we get:
\goodbreak
\allowdisplaybreaks
\begin{eqnarray*}
	(\Gamma\otimes\mop{Id})\Gamma(\bm\omega)
	&=&\sum_{s\ge r\ge 1}
	\sum_{\sigma:\{1,\ldots,n\}\scto\{1,\ldots,s\}}
	\sum_{\tau:\{1,\ldots,s\}\scto\{1,\ldots,r\}}
	\bm\omega^{\tau\circ\sigma}
	\otimes \big((\bm\omega^\sigma)_\tau^1 \qshu\ldots\qshu(\bm\omega^\sigma)_\tau^r\big)\\ 
	& & \hskip 75mm  \otimes(\bm\omega_\sigma^1
	\qshu\cdots\qshu\bm\omega_\sigma^s)\\
	&=&(\mop{Id}\otimes\Gamma)\Gamma(\bm\omega).
\end{eqnarray*}
It remains to check the comodule-Hopf algebra property. On the one hand we have for any word $\bm\omega$ of length $n$:
\begin{equation*}
	(\Delta\otimes\mop{Id})\Gamma(\bm\omega)
	=\sum_{s\ge 1}\sum_{\sigma:\{1,\ldots,n\}\scto\{1,\ldots, s\}}\sum_{\bm\omega^\sigma
	=\bm u.\bm v}\bm u\otimes\bm v\otimes (\bm\omega_\sigma^1\qshu\cdots\,\qshu\bm\omega_\sigma^s).
\end{equation*}
On the other hand, we compute:
\allowdisplaybreaks
\begin{eqnarray*}
	(\mop{Id}\otimes\mop{Id}\otimes\,\qshu)\tau_{23}(\Gamma\otimes\Gamma)\Delta(\bm\omega)
	&=&\sum_{\bm\omega=\bm\omega^1.\bm\omega^2}(\mop{Id}\otimes\mop{Id}\otimes\,\qshu)
	\tau_{23}(\Gamma\otimes\Gamma)(\bm\omega^1\otimes\bm\omega^2)\\
	&&\hskip -65mm =\sum_{\bm\omega=\bm\omega^1.\bm\omega^2}
	\sum_{s_1,s_2\ge 1}\sum_{\scriptstyle{\sigma_1:\{1,\ldots,p\}\scto\{1,\ldots, s_1\}}\atop \scriptstyle\sigma_2:\{1,\ldots,q\}\scto\{1,\ldots, s_2\}}(\bm\omega^1)^{\sigma_1}\otimes(\bm\omega^2)^{\sigma_2}
	\otimes \big((\bm\omega^1)_{\sigma_1}^1\,\qshu\cdots\,\qshu(\bm\omega^1)_{\sigma_1}^{s_1} \,
	\qshu (\bm\omega^2)_{\sigma_2}^1\,\qshu\cdots\,\qshu(\bm\omega^2)_{\sigma_2}^{s_2}\big)
\end{eqnarray*}
with $p=|\bm\omega^1|$ and $q=|\bm\omega^2|$ (recall that $|-|$ stands for the length of a word). The two surjections $\sigma_1$ and $\sigma_2$ concatenate to give rise to a surjection $\sigma:\{1,\ldots,n\}\cto\{1,\ldots,s_1+s_2\}$ with $\sigma_p<\sigma_{p+1}$, hence both expressions match. Checking commutativity of the two other diagrams is more easy and left to the reader.

\section{Contracting arborification}
\label{arborification}

The notion of arborification has been introduced by J.~Ecalle in \cite{E92}. A detailed presentation can be found in \cite{EV}. See also \cite{FM} for a Hopf-algebraic presentation. An \textsl{arborescent mould} is a collection $(M^F)$, where $F$ is any rooted forest decorated by the alphabet $\Omega$. \textsl{Arborification}, in its simple or contracting version, associates an arborescent mould to any ordinary mould in the sense of Section \ref{sect:moules}. We shall only consider the contracting version here.

\subsection{Decorated rooted forests}
\label{ssect:decTree}

A rooted forest is a finite oriented graph $F$ without cycles, such that every vertex has at most one incoming edge. Vertices with no incoming edge are called the roots, and it is easily seen that any nonempty forest has at least one root. A rooted forest  with one single root is a \textsl{rooted tree}. The set $\Cal V(F)$ of vertices of $F$ is partially ordered, i.e., $u\le v$ if and only if there exists an oriented path from one root to $v$ through $u$.\\

An $\Omega$-decorated rooted forest (where $\Omega$ is a given set) is a pair $F=(\overline F,d)$ where $\overline F$ is a rooted forest and where $d:\Cal V(\overline F)\to\Omega$ is the decoration. Let $\Cal H^{\Omega}_<$ be the linear span of rooted forests decorated by $\Omega$. It is the free commutative algebra over the linear span $\Cal T^\Omega$ of $\Omega$-decorated rooted trees. It is a graded commutative Hopf algebra \cite{ConnesKreimer,Dur}, with the following coproduct:
\begin{equation}\label{delta}
	\Delta(F)	=\sum_{V_1\sqcup V_2 =\Cal V(F) \atop V_1<V_2}F\restr{V_2}\otimes F\restr{V_1}.
\end{equation}
Here $\Cal V(F)$ stands for the set of vertices of $F$, the restriction of the forest $F$ to a subset of $\Cal V(F)$ is obtained by keeping only the edges joining two vertices in the subset, and $V_1<V_2$ means that for any $x\in V_1$ and $y\in V_2$, one has  $y\not < x$. Such a couple $(V_1,V_2)$ is called an \textsl{admissible cut} \cite{Mu}. For any $b\in\Omega$, the operator $B_+^b:\Cal H^{\Omega}_<\to\Cal H^{\Omega}_<$, defined by grafting all the trees of a forest on a common root decorated by $b$, verifies the following \textsl{cocycle equation}:
$$
	\Delta\big(B_+^b(F)\big)=(\mop{Id}\otimes B_+^b)\Delta(F)+B_+^b(F)\otimes\un,
$$
where $\un$ here stands for the empty forest. The operator $L^b:\Cal H^\Omega\to\Cal H^\Omega$ defined by $L^b(\bm\omega)=\bm\omega b$ verifies the same cocycle equation with respect to the deconcatenation coproduct:
$$
	\Delta\big(L^b(\bm\omega)\big)=(\mop{Id}\otimes L^b)\Delta(\bm\omega)+L^b(\bm \omega)\otimes\un.
$$
Now there is a unique surjective Hopf algebra morphism \cite{F}
$$
	\mathfrak a:\Cal H_<^\Omega\longrightarrow\hskip -5.8mm\longrightarrow \Cal H^\Omega,
$$
such that $\mathfrak a(\un)=\un$, and such that:
$$
	\mathfrak a\circ B^b_+=L^b\circ\mathfrak a,
$$
called \textsl{contracting arborification}\footnote{The \textsl{simple arborification} $\mathfrak a_0$ is obtained in a similar way, except that the quasi-shuffle Hopf algebra $\Cal H^\Omega$ is replaced by the shuffle Hopf algebra. For any forest $F$, its image $\mathfrak a_0(F)$ is the sum of all linear extensions of $F$, without contractions.}. For any forest $F$, its image $\mathfrak a(F)$ is the sum of all linear extensions of $F$, with contractions allowed. For example:
$$
	\mathfrak a(\arbrebbdec{\omega_3}{\omega_1}{\omega_2})
	=\omega_1\omega_2\omega_3+\omega_2\omega_1\omega_3+[\omega_1+\omega_2]\omega_3.
$$
For any mould $M: \Cal H^\Omega \to \Cal A$, the corresponding so-called \textsl{arborified mould} will be defined by 
$$
	M_<:=M\circ\mathfrak a:\Cal H^\Omega_<\to \Cal A.
$$

\subsection{The internal coproduct on decorated rooted forests}
\label{ssect:coprodDecTrees}

Let $F$ be a rooted forest decorated by $\Omega$. A \textsl{covering subforest} is a partition $G$ of the set $\Cal V(F)$ of vertices of $F$ into connected components, which yields another $\Omega$-decorated rooted forest still denoted by $G$. The \textsl{contracted forest} $F/G$ is obtained by shrinking each connected component of $G$ onto one vertex, which will be decorated by the sum of the decorations of all vertices of the connected component involved. The coproduct defined by:
\begin{equation}
\label{extraction-contraction}
	\Gamma(F)=\sum_{G\hbox{ \eightrm covering} \atop \hbox{ \eightrm  subforest of }F}F/G\otimes G
\end{equation}
is coassociative and compatible with the product and internal, in the sense that if a forest $F$ has total weight $\omega\in\Omega$, any covering subforest $G$ has also total weight $\omega$, as well as the contracted forest $F/G$. Moreover, $(\Cal H^\Omega_<,\cdot,\Delta)$ is a right comodule-Hopf algebra over the bialgebra $(\Cal H^\Omega_<,\cdot,\Gamma)$. This has been proved in \cite{CEM} without decorations, the adaptation to the decorated case is straightforward and left to the reader\footnote{Note that compared to \cite{CEM}, we have flipped the internal coproduct. As a result, we get a right comodule-bialgebra structure instead of a left one.}.

\begin{thm}\label{arbo-interne}
The contracting arborification respects the internal coproducts, namely:
\begin{equation}
	\Gamma\circ\mathfrak a=(\mathfrak a\otimes\mathfrak a)\circ\Gamma.
\end{equation}
\end{thm}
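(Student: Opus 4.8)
The plan is to reduce the identity $\Gamma\circ\mathfrak a=(\mathfrak a\otimes\mathfrak a)\circ\Gamma$ to $\Omega$-decorated rooted trees and then argue by strong induction on the number of vertices. Since $\mathfrak a\colon(\Cal H^\Omega_<,\cdot)\to(\Cal H^\Omega,\,\qshu)$ is an algebra morphism and both internal coproducts are compatible with the respective products, the two maps $\gamma:=\Gamma\circ\mathfrak a$ and $\gamma':=(\mathfrak a\otimes\mathfrak a)\circ\Gamma$ are algebra morphisms from $(\Cal H^\Omega_<,\cdot)$ to $(\Cal H^\Omega\otimes\Cal H^\Omega,\,\qshu\otimes\qshu)$; as $\Cal H^\Omega_<$ is the free commutative algebra on decorated rooted trees it suffices to prove $\gamma(t)=\gamma'(t)$ for every such tree $t$. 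The base case $t=\bullet_\omega$ (one vertex, decorated by $\omega$) is immediate: $\gamma(\bullet_\omega)=\Gamma(\omega)=\omega\otimes\omega$ because letters are group-like for $\Gamma$, while $\gamma'(\bullet_\omega)=(\mathfrak a\otimes\mathfrak a)(\bullet_\omega\otimes\bullet_\omega)=\omega\otimes\omega$ via the trivial covering subforest.

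For the inductive step the key observation is that both $\gamma$ and $\gamma'$ satisfy the relation
\[
	(\Delta\otimes\mop{Id})\circ\xi=(\mop{Id}\otimes\mop{Id}\otimes\qshu)\circ\tau_{23}\circ(\xi\otimes\xi)\circ\Delta ,
\]
where on the right $\Delta$ is the coproduct \eqref{delta} on $\Cal H^\Omega_<$ and on the left the deconcatenation coproduct on $\Cal H^\Omega$: for $\xi=\gamma$ this is the comodule--bialgebra identity of Theorem~\ref{coprod-c} combined with $(\mathfrak a\otimes\mathfrak a)\circ\Delta=\Delta\circ\mathfrak a$, and for $\xi=\gamma'$ it is the analogous identity for $(\Cal H^\Omega_<,\cdot,\Delta,\Gamma)$ from \cite{CEM} combined with multiplicativity of $\mathfrak a$. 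Evaluating this on a tree $t$ with $n$ vertices and writing $\Delta(t)=t\otimes\un+\un\otimes t+\sum t\restr{V_2}\otimes t\restr{V_1}$ over the proper admissible cuts, each $t\restr{V_1}$ and $t\restr{V_2}$ has fewer than $n$ vertices, so $\gamma$ and $\gamma'$ agree on those terms by the induction hypothesis and multiplicativity (a forest being the product of its tree components), and they agree on $\un$; subtracting, the surviving terms collapse to $(\Delta'\otimes\mop{Id})\bigl(\gamma(t)-\gamma'(t)\bigr)=0$, where $\Delta'$ is the reduced deconcatenation coproduct. As the kernel of $\Delta'$ is the space of primitives of $\Cal H^\Omega$, which for deconcatenation is exactly the span of the letters $\omega\in\Omega$, we conclude that $\gamma(t)-\gamma'(t)$ lies in $(\text{span of }\Omega)\otimes\Cal H^\Omega$.

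It then remains to kill this residual term, and this is the step I expect to be the main obstacle. I would handle it by applying $P_1\otimes\mop{Id}$, where $P_1$ is the projection onto the span of the letters (the length-one words): in the defining formula \eqref{dual-comp} of $\Gamma$ on $\Cal H^\Omega$ the first tensor factor of the summand indexed by a decomposition into $s$ blocks is a word of length $s$, so only the $s=1$ term survives $P_1$ and yields $\|\bm\omega\|\otimes\bm\omega$; since $\mathfrak a$ preserves the total weight, $(P_1\otimes\mop{Id})\gamma(t)=\|t\|\otimes\mathfrak a(t)$. In the defining formula \eqref{extraction-contraction} of $\Gamma$ on $\Cal H^\Omega_<$ the contracted tree $F/G$ is a single vertex only for the trivial covering subforest, and $\mathfrak a\circ B_+^b=L^b\circ\mathfrak a$ shows that $\mathfrak a$ of a tree with at least two vertices is a sum of words of length $\ge2$; hence $(P_1\otimes\mop{Id})\gamma'(t)$ is again $\|t\|\otimes\mathfrak a(t)$. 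Thus $\gamma(t)-\gamma'(t)$, which lies in the image of $P_1\otimes\mop{Id}$ and is annihilated by it, must be zero, completing the induction. An alternative, purely combinatorial route would expand $\Gamma(\mathfrak a(F))$ and $(\mathfrak a\otimes\mathfrak a)(\Gamma(F))$ as sums over contracting linear extensions and covering subforests and match the two index sets by a weight-preserving bijection; this is feasible but essentially re-imports the weak quasi-shuffle bookkeeping of Section~\ref{sect:wqsh}, so the comodule--bialgebra argument above seems the more economical one.
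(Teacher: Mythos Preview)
Your proof is correct, and it takes a genuinely different route from the paper's.

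The paper argues by an explicit combinatorial bijection. It rewrites both $\Gamma\circ\mathfrak a(F)$ and $(\mathfrak a\otimes\mathfrak a)\circ\Gamma(F)$ as sums over concrete index sets: triples $(\sigma,\tau,\rho)$ consisting of a strict linear extension $\sigma:\Cal V(F)\ccto\{1,\ldots,s\}$, a nondecreasing surjection $\tau$, and a $\tau$-quasi-shuffle $\rho$ on one side, and triples $(G,\alpha,\beta)$ consisting of a covering subforest $G$ together with strict linear extensions $\alpha$ of $F/G$ and $\beta$ of $G$ on the other. A lemma then exhibits a bijection $\Phi$ between these two sets which matches the summands term by term. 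This is essentially the ``purely combinatorial route'' you mention in your last paragraph and set aside.

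Your approach instead leverages the comodule--bialgebra identities on both $\Cal H^\Omega$ (Theorem~\ref{coprod-c}) and $\Cal H^\Omega_<$ (from~\cite{CEM}) together with the fact that $\mathfrak a$ is a Hopf algebra morphism. These already force the two maps $\gamma=\Gamma\circ\mathfrak a$ and $\gamma'=(\mathfrak a\otimes\mathfrak a)\circ\Gamma$ to satisfy the same intertwining relation with $\Delta$, so the difference $\gamma(t)-\gamma'(t)$ is inductively pushed into $\ker(\Delta'\otimes\mop{Id})=(\hbox{span of }\Omega)\otimes\Cal H^\Omega$; the projection $P_1\otimes\mop{Id}$ then finishes the job, using only that $\mathfrak a$ preserves weight and that a tree with at least two vertices has height $\ge 2$. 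All the ingredients you invoke are established earlier in the paper, and the deduction is clean.

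What each approach buys: the paper's bijection is self-contained and gives an explicit correspondence between linear extensions with block structure and covering subforests with linear extensions, which is of independent combinatorial interest. Your argument is more economical and conceptual: it shows the theorem is essentially forced once one knows $\mathfrak a$ is a Hopf algebra morphism and both $(\Delta,\Gamma)$-interactions hold, so no new combinatorics is needed.
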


\begin{proof}
Let us introduce some more notations. For any $\Omega$-decorated forest $F$, we denote by $d_v$ the decoration of the vertex $v$ for any $v\in\Cal V(F)$. The notation $\sigma:\Cal V(F)\cto\,\{1,\ldots,s\}$ (resp.~$\sigma:\Cal V(F)\ccto\,\{1,\ldots,s\}$) stands for a surjective map from $\Cal V(F)$ onto $\{1,\ldots,s\}$ such that $\sigma_u\le\sigma_v$ (resp.~$\sigma_u<\sigma_v$) whenever $u<v$ for the partial order on $\Cal V(F)$ induced by the rooted forest. For any surjective map $\sigma:\Cal V(F)\to\hskip-9.3pt\to\{1,\ldots,s\}$, we denote by $F^\sigma$ the word $F_1^\sigma\cdots F_s^\sigma$, where:
$$
	F_j^\sigma:=\Big[\sum_{\sigma_v=j}d_v\Big]\in\Omega.
$$
With these notations at hand, the contracting arborification of the rooted decorated forest $F$ can be displayed as follows:
\begin{equation}
	\mathfrak a(F)=\sum_{s\ge 1}\sum_{\sigma:\Cal V(F)\sccto\{1,\ldots,s\}}F^\sigma.
\end{equation}
On the one hand we have:
\allowdisplaybreaks
\begin{eqnarray*}
	\Gamma\circ\mathfrak a(F)
	&=&\sum_{s\ge 1}\sum_{\sigma:\Cal V(F)\sccto\{1,\ldots,s\}}\Gamma(F^\sigma)\\	
	&=&\sum_{s\ge r\ge 1}\ \sum_{\sigma:\Cal V(F)\sccto\{1,\ldots,s\}}\ \sum_{\tau:\{1,\ldots,s\}\scto\{1,\ldots,r\}}
	F^{\tau\circ\sigma}\otimes\big((F^\sigma)_\tau^1\qshu\cdots\qshu(F^\sigma)_\tau^r\big)\\
	&=&\sum_{s\ge r\ge 1}\ \sum_{\sigma:\Cal V(F)\sccto\{1,\ldots,s\}}\ \sum_{\tau:\{1,\ldots,s\}\scto\{1,\ldots,r\}}\
	 \sum_{\rho\in\smop{qsh}(\tau)}F^{\tau\circ\sigma}\otimes F^{\rho\circ\sigma},
\end{eqnarray*}
where $\mop{qsh}(\tau)$ stands for surjective maps from $\{1,\ldots,s\}$ onto $\{1,\ldots, t\}$ (for some $t\le s$) which are increasing on each block $\tau^{-1}(j),\,j\in\{1,\ldots,r\}$. On the other hand,
\allowdisplaybreaks
\begin{eqnarray*}
	(\mathfrak a\otimes\mathfrak a)\circ\Gamma(F)
	&=&\sum_{ G\hbox{ \eightrm covering} \atop \hbox{ \eightrm  subforest of }F}\mathfrak a(F/G)\otimes\mathfrak a(G)\\
	&=&\sum_{G\hbox{ \eightrm covering} \atop \hbox{ \eightrm  subforest of }F}\ \sum_{r,t\ge 1}\ 
	\sum_{{\scriptstyle\alpha:\Cal V(F/G)\sccto\{1,\ldots,r\}\atop\scriptstyle\beta:\Cal V(G)\sccto \{1,\ldots, t\}}}
	(F/G)^\alpha\otimes G^\beta.
\end{eqnarray*}
Theorem \ref{arbo-interne} will then directly stem from the following lemma.

\begin{lem}\label{ab}
Let $F$ be a rooted forest decorated by $\Omega$. There is a bijective correspondence $\Phi$ from the set
$$	
	A:=\big\{(\sigma,\tau,\rho),\ \sigma:\Cal V(F)\ccto\{1,\ldots,s\},\, {\tau:\{1,\ldots, s\}\cto\{1,\ldots,r\} \atop
	\hbox{ and }\rho\in\mop{qsh}(\tau)\hbox{ for some }s\ge r\ge 1}\big\}
$$
onto the set 
\begin{eqnarray*}
	B:=\big\{(G,\alpha,\beta),\ G \hbox { covering subforest of }F, \, {\alpha:\Cal V(F/G)\ccto\{1,\ldots,r\} \atop \hbox{and}\
	\beta:\Cal V(G)\ccto\{1,\ldots,t\}
	\hbox{ for some }r,t\ge 1}\big\}
\end{eqnarray*}
such that, for $(G,\alpha,\beta)=\Phi(\sigma,\tau,\rho)$, the identities $F^{\tau\circ\sigma}=(F/G)^\alpha$ and $F^{\rho\circ\sigma}=G^\beta$ hold.
\end{lem}

\begin{proof}
Let $(\sigma,\tau,\rho)\in A$, with $\sigma:\Cal V(F)\ccto\{1,\ldots,s\}$, $\tau:\{1,\ldots, s\}\cto\{1,\ldots,r\}$ and $\rho\in\mop{qsh}(\tau)$  for some $s\ge r\ge 1$. The connected components of the blocks $(\tau\circ\sigma)^{-1}(j)$ (for $j\in\{1,\ldots,r\}$) define a covering subforest $G$ of $F$, and $\tau\circ\sigma:\Cal V(F)\ccto\{1,\ldots, r\}$ factorizes itself through a unique $\alpha:\Cal V(F/G)\ccto\{1,\ldots, r\}$. Moreover $\beta:=\rho\circ\sigma:\Cal V(G)\ccto\{1,\ldots,t\}$ for some $t\le s$, which defines $\Phi:A\to B$ by $\Phi(\sigma,\tau,\rho)=(G,\alpha,\beta)$. The identities $F^{\tau\circ\sigma}=(F/G)^\alpha$ and $F^{\rho\circ\sigma}=G^{\rho\circ\sigma}=G^\beta$ obviously hold.\\

Conversely, if $(G,\alpha,\beta)\in B$, where $G$ is a covering subforest of $F$ with $\alpha:\Cal V(F/G)\ccto\{1,\ldots,r\}$ and $\beta:\Cal V(G)\ccto\{1,\ldots,t\}$ for some $r,t\ge 1$, the surjection $\alpha$ lifts to a surjection $\sigma':\Cal V(F)\cto\{1,\ldots,r\}$ constant on the connected components of $G$. There is a unique $s\ge r$  and a unique surjection $\sigma:\Cal V(F)\ccto\{1,\ldots,s\}$ such that $\sigma_u\le \sigma_v$ if and only if $\sigma'_u\le\sigma'_v$: the \textsl {standardization} of $\sigma'$. Then there exists a unique $\tau:\{1,\ldots,s\}\cto\{1,\ldots,r\}$ such that $\sigma'=\tau\circ\sigma$, and a unique $\rho:\{1,\ldots,s\}\to\hskip -9.3pt\to\{1,\ldots,t\}$ in $\mop{qsh}(\tau)$ such that $\beta=\rho\circ\sigma$, which defines $\Phi^{-1}$.
\end{proof}

\noindent\textit{Proof of Theorem \ref{arbo-interne} (end).} According to Lemma \ref{ab}, the two terms $\Gamma\circ\mathfrak a(F)$ and $(\mathfrak a\otimes\mathfrak a)\circ\Gamma(F)$ match for any decorated rooted forest $F$.
\end{proof}

\subsection{Reminder on pre-Lie algebras and their enveloping algebras}
\label{sect:pre-lie}

A {\sl left pre-Lie algebra\/} over a field $\bm k$ is a $\bm k$-vector space $P$ with a bilinear binary composition $ \rhd  $ that satisfies the left pre-Lie identity:
\begin{equation}
    (a \rhd   b) \rhd   c-a \rhd  (b \rhd  c)=
    (b \rhd   a) \rhd   c-b \rhd  (a \rhd   c),
    \label{prelie}
\end{equation}
for $a,b,c \in P$. The left pre-Lie identity rewrites as:
\begin{equation}\label{prelie1-2}
	L_{[a,b]}=[L_a,L_b],
\end{equation}
where $L_a: P \to P$ is defined by $L_ab=a\rhd b$, and where the bracket on the left-hand side is defined by $[a,b]:=a\rhd b-b\rhd a$. As an easy consequence this bracket satisfies the Jacobi identity.\\

Let us recall an important result by D.~Guin and J-M.~Oudom \cite{GO, OudomGuin08, M10}. Let $P$ be any left pre-Lie algebra, and let $S(P)$ be its symmetric algebra, i.e., the free commutative algebra on $P$. Let $P_{\smop{Lie}}$ be the underlying Lie algebra of $P$, i.e., the vector space $P$ endowed with the Lie bracket given by $[a,b]=a\rhd b-b\rhd a$ for any $a,b\in P$, and let $\Cal U(P)$ be the enveloping algebra of  the Lie algebra $P_{\smop{Lie}}$, endowed with its usual increasing filtration. Let us consider the associative algebra $\Cal U(P)$ as a left module over itself. There exists a left $\Cal U(P)$-module structure on $S(P)$ and a canonical left $\Cal U(P)$-module isomorphism $\eta_P : \Cal U(P) \to \Cal
S(P)$, such that the associated graded  linear map $\mop{Gr}\eta_P:\mop{Gr}\Cal U(P)\to S(P)$ is an isomorphism of commutative graded algebras.\\

\noindent The proof in \cite[Paragraph 4.3]{M10} can be summarized as follows. The Lie algebra morphism 
\begin{align*}
	L:P	&\longrightarrow\mop{End}P\\
	a	&\longmapsto (L_a:b\mapsto a\rhd b)
\end{align*}
extends by Leibniz rule to a unique Lie algebra morphism $ L:P\to \mop{Der}S(P)$. It is easily seen that the map $m : P\to\mop{End}S(P)$ defined by:
\begin{equation}
	m_av=av+L_av
\end{equation}
is a Lie algebra morphism. Now $m$ extends, by universal property of the enveloping algebra, to a unique algebra morphism $m:\Cal U(P)\to\mop{End} S(P)$. The linear map:
\begin{eqnarray*}
	\eta_P:\Cal U(P)
	&\longrightarrow& S(P)\\
	u&\longmapsto & m_u.1
\end{eqnarray*}
is clearly a morphism of left $\Cal U(P)$-modules. It is immediately seen by induction that for any $a_1,\ldots,a_n\in P$ we have $\eta_P(a_1\cdots a_n)=a_1\cdots a_n+v$ where $v$ is a sum of terms of degree smaller or equal to $n-1$, which proves the result. Functorial properties are moreover fulfilled:

\begin{prop}
Let $P$ and $Q$ be two left pre-Lie algebras over the same field $\bm k$. Any pre-Lie morphism $\alpha:P\to Q$ uniquely extends to two algebra morphisms $\overline\alpha:S(P)\to S(Q)$ and $\wt\alpha:\Cal U(P)\to\Cal U(Q)$ such that the following diagram commutes:
\diagramme{
\xymatrix{\Cal U(P)\ar[r]_{\eta_P}^\sim\ar[d]_{\wt\alpha} &S(P)\ar[d]^{\overline\alpha}\\
\Cal U(Q)\ar[r]_{\eta_Q}^\sim & S(Q)
}
}
\end{prop}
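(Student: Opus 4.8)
The plan is to produce $\overline\alpha$ and $\wt\alpha$ from the appropriate universal properties, and then to deduce commutativity of the square by showing that $\overline\alpha$ intertwines the left $\Cal U(P)$-actions underlying the construction of $\eta_P$ and $\eta_Q$ recalled above.

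First I would construct the two morphisms. Since $S(Q)$ is the free commutative algebra on $Q$, the composite of $\alpha$ with the inclusion $Q\hookrightarrow S(Q)$ extends uniquely to a unital algebra morphism $\overline\alpha:S(P)\to S(Q)$; uniqueness holds because $P$ generates $S(P)$ as an algebra. Next, the pre-Lie identity $\alpha(a\rhd b)=\alpha(a)\rhd\alpha(b)$ immediately yields $\alpha([a,b])=[\alpha(a),\alpha(b)]$, so $\alpha$ is a morphism of Lie algebras $P_{\smop{Lie}}\to Q_{\smop{Lie}}$; by the universal property of the enveloping algebra it extends uniquely to a unital algebra morphism $\wt\alpha:\Cal U(P)\to\Cal U(Q)$, again uniquely since $P$ generates $\Cal U(P)$.

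For the commutativity, write $L,m$ (resp.\ $L',m'$) for the maps attached to $P$ (resp.\ $Q$) in the construction of $\eta_P$ (resp.\ $\eta_Q$) recalled above. The key point is the identity
\[
	\overline\alpha\circ m_u=m'_{\wt\alpha(u)}\circ\overline\alpha\qquad(u\in\Cal U(P))
\]
of linear maps $S(P)\to S(Q)$. Granting it, applying both sides to $1\in S(P)$ and using $\overline\alpha(1)=1$ yields $\overline\alpha(\eta_P(u))=\overline\alpha(m_u.1)=m'_{\wt\alpha(u)}.1=\eta_Q(\wt\alpha(u))$, which is the assertion $\overline\alpha\circ\eta_P=\eta_Q\circ\wt\alpha$. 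To prove the identity, note that the set of $u$ for which it holds is a unital subalgebra of $\Cal U(P)$: it is a linear subspace, it contains $1$ because $m_1=\mop{Id}=m'_1$ and $\wt\alpha(1)=1$, and it is stable under products because $m$ and $m'$ are algebra morphisms and $\wt\alpha$ is multiplicative. As $\Cal U(P)$ is generated by $P$, it suffices to treat $u=a\in P$; there $m_av=av+L_av$ and $m'_{\alpha(a)}w=\alpha(a)w+L'_{\alpha(a)}w$, and since $\overline\alpha$ is an algebra morphism restricting to $\alpha$ on $P$, the identity for $u=a$ is equivalent to $\overline\alpha\circ L_a=L'_{\alpha(a)}\circ\overline\alpha$ on $S(P)$. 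Both sides of this last identity are $\overline\alpha$-derivations $S(P)\to S(Q)$, that is, linear maps $D$ with $D(vw)=D(v)\overline\alpha(w)+\overline\alpha(v)D(w)$ (which forces $D(1)=0$), hence they agree on $S(P)$ as soon as they agree on the generating subspace $P$; and for $b\in P$ one has $\overline\alpha(L_ab)=\overline\alpha(a\rhd b)=\alpha(a\rhd b)=\alpha(a)\rhd\alpha(b)=L'_{\alpha(a)}(\overline\alpha(b))$, the middle equality being exactly the pre-Lie morphism hypothesis. This proves the identity, and hence the proposition. The only place calling for a little care is remembering that $\overline\alpha\circ L_a$ is a derivation relative to $\overline\alpha$ rather than an ordinary derivation of $S(P)$; with that understood, the induction on the symmetric degree is routine, and since every step rests on the universal properties already invoked above, no genuine obstacle arises.
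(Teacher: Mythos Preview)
Your proof is correct and follows essentially the same route as the paper's: both reduce commutativity of the square to the intertwining identity $\overline\alpha\circ L_a=L_{\alpha(a)}\circ\overline\alpha$, and both check this on monomial generators of $S(P)$. Your framing via the subalgebra $\{u:\overline\alpha\circ m_u=m'_{\wt\alpha(u)}\circ\overline\alpha\}$ and the $\overline\alpha$-derivation property is a clean repackaging of the paper's induction on filtration degree, and you additionally spell out the existence and uniqueness of $\overline\alpha$ and $\wt\alpha$ from universal properties, which the paper leaves implicit.
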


\begin{proof}
By induction on the filtration degree, the degree zero case being trivial. For $a\in P$ and $u\in\Cal U(P)$ we have:
\begin{eqnarray*}
	\overline\alpha\circ \eta_P(au)
	&=&\overline\alpha(m_a m_u.1)\\
	&=&\big(am_u.1+L_a(m_u.1)\big)\\
	&=&\alpha(a)\big(\overline\alpha\circ\eta_P(u)\big)+(\overline\alpha\circ L_a)\big(\eta_P(u)\big),
\end{eqnarray*}
whereas:
\allowdisplaybreaks
\begin{eqnarray*}
	\eta_Q\circ\wt\alpha(au)
	&=&\eta_Q\big(\alpha(a)\wt\alpha(u)\big)\\
	&=&\alpha(a)\big((\eta_Q\circ\wt\alpha)(u)\big)+L_{\alpha(a)}\big(\eta_Q\circ\wt\alpha(u)\big)\\
	&=&\alpha(a)\big((\overline\alpha\circ\eta_P)(u)\big)+L_{\alpha(a)}\big(\overline\alpha\circ\eta_P(u)\big).
\end{eqnarray*}
It remains to show that the following identity holds:
\begin{equation}
	\overline\alpha\circ L_a=L_{\alpha(a)}\circ \overline \alpha,
\end{equation}
which is easily proven by direct computation on any argument $v=a_1\cdots a_r\in S(P)$.
\end{proof}

\begin{cor}\label{dipterous}
Let $\#$ be the product on $S(P)$ defined by $u\# v:=\eta_P\big(\eta_P^{-1}(u)\eta_P^{-1}(v)\big)$, and similarly on $S(Q)$. Then for any pre-Lie morphism $\alpha:P\to Q$, the map $\overline\alpha:S(P)\to S(Q)$ is a unital algebra morphism for both products $\cdot$ (commutative) and $\#$ (noncommutative in general).
\end{cor}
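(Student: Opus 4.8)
The plan is to deduce everything from the commuting square established in the preceding Proposition, namely $\overline\alpha\circ\eta_P=\eta_Q\circ\wt\alpha$, together with the fact — also part of that Proposition — that $\wt\alpha:\Cal U(P)\to\Cal U(Q)$ is a morphism for the associative (enveloping-algebra) products. First I would dispose of the commutative product $\cdot$: by its very construction $\overline\alpha$ is the unique unital algebra morphism $S(P)\to S(Q)$ extending $\alpha$, so multiplicativity and unitality for $\cdot$ hold by definition and require no argument.

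For the product $\#$, I would rewrite $\overline\alpha$ using the square as $\overline\alpha=\eta_Q\circ\wt\alpha\circ\eta_P^{-1}$, which is legitimate since $\eta_P$ and $\eta_Q$ are linear isomorphisms. Then for any $u,v\in S(P)$,
\[
	\overline\alpha(u\# v)
	=\eta_Q\wt\alpha\bigl(\eta_P^{-1}(u)\,\eta_P^{-1}(v)\bigr)
	=\eta_Q\bigl(\wt\alpha\eta_P^{-1}(u)\cdot\wt\alpha\eta_P^{-1}(v)\bigr),
\]
where the second equality uses that $\wt\alpha$ respects the associative products. Since $\wt\alpha\circ\eta_P^{-1}=\eta_Q^{-1}\circ\overline\alpha$, again by the commuting square, the right-hand side equals $\eta_Q\bigl(\eta_Q^{-1}\overline\alpha(u)\cdot\eta_Q^{-1}\overline\alpha(v)\bigr)=\overline\alpha(u)\#\overline\alpha(v)$, which is exactly multiplicativity for $\#$. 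For unitality, note that $\eta_P$ sends $1_{\Cal U(P)}$ to $m_{1}.1=1\in S(P)$, so $1$ is the $\#$-unit of $S(P)$, and $\overline\alpha(1)=1$ since $\overline\alpha$ is a unital morphism for $\cdot$; similarly on $S(Q)$.

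The argument is entirely formal, so I do not anticipate a genuine obstacle. The only point worth emphasizing is that $\#$ is by definition the transport of the enveloping-algebra product along $\eta_P$ (respectively $\eta_Q$): consequently the multiplicativity of $\overline\alpha$ for $\#$ is not a claim that $\eta_P$ is an algebra isomorphism (it is not, in general), but merely a consequence of $\wt\alpha$ being compatible with the two transports, which is guaranteed precisely by the commuting square of the Proposition. The potential noncommutativity of $\#$ plays no role, since morphism-hood is checked on arbitrary pairs $(u,v)$.
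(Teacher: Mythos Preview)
Your argument is correct and is precisely the intended one: the paper states this as a corollary without proof, and your derivation from the commuting square $\overline\alpha\circ\eta_P=\eta_Q\circ\wt\alpha$ together with the multiplicativity of $\wt\alpha$ is exactly how it follows. There is nothing to add.
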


Recall \cite{ChaLiv} that the free pre-Lie algebra generated by $\Omega$ is the linear span $\Cal T^\Omega$ of $\Omega$-decorated rooted trees. The pre-Lie product $s\to t$ of two trees is given by grafting the tree $s$ successively at every vertex of $t$ and taking the sum. In this particular case, the $\#$ product is known as the Grossman--Larson product on rooted forests \cite{GL}. It is dual to the coproduct $\Delta$ in the sense that we have:
\begin{equation}
\langle F\# G, \,H\rangle=\frac{|\mop{Aut} F||\mop{Aut} G|}{|\mop{Aut} H|}\langle F\otimes G,\, \Delta H\rangle.
\end{equation}
Here $|\mop{Aut} F|$ is the symmetry factor of $F$, and similarly for $G$ and $H$. The pairing is defined by $\langle F,G\rangle=\delta_F^G$, where $\delta$ is the Kronecker delta.

\begin{prop}\label{dipterous-bis}
Let $A$ be a pre-Lie algebra, and let $\Cal S(A)$ be its symmetric algebra, endowed with the free commutative product $\cdot$ and the product $\#$ defined above. Let $\bm a=(a_\omega)_{\omega\in\Omega}$ be a collection of elements of $A$. There exists a unique linear map $\Cal F_{\bm a}:\Cal H^\Omega_<\to S(A)$ which is a unital algebra morphism for both products $\cdot$ and $\#$, such that $\Cal F_{\bm a}(\bullet_\omega)=a_\omega$.
\end{prop}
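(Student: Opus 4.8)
The plan is to realize $\Cal F_{\bm a}$ as the symmetric-algebra extension of a pre-Lie morphism and then to quote Corollary \ref{dipterous}. First, recall from \cite{ChaLiv} that the linear span $\Cal T^\Omega$ of $\Omega$-decorated rooted trees, equipped with the grafting product, is the \emph{free} left pre-Lie algebra on $\Omega$, generated by the one-vertex trees $\bullet_\omega$. Hence the assignment $\bullet_\omega\mapsto a_\omega$ extends uniquely to a pre-Lie algebra morphism $\alpha\colon\Cal T^\Omega\to A$. Since $\Cal H^\Omega_<$ is the free commutative algebra over $\Cal T^\Omega$, I would identify it with $S(\Cal T^\Omega)$ as a commutative algebra; under this identification the Guin--Oudom product $\#$ on $S(\Cal T^\Omega)$ is precisely the Grossman--Larson product \cite{GL} recalled above. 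Corollary \ref{dipterous} then says that $\overline\alpha\colon S(\Cal T^\Omega)\to S(A)$ is a unital algebra morphism for both $\cdot$ and $\#$, and by construction $\overline\alpha(\bullet_\omega)=\alpha(\bullet_\omega)=a_\omega$. Setting $\Cal F_{\bm a}:=\overline\alpha$ settles existence.

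For uniqueness, suppose $\Cal F\colon\Cal H^\Omega_<\to S(A)$ is any linear map which is a unital morphism for both products with $\Cal F(\bullet_\omega)=a_\omega$. From the defining formula $m_av=av+L_av$ of the Guin--Oudom construction one reads off, for $a,b$ in the degree-one part of $S(P)$, the identity $a\# b=a\cdot b+a\rhd b$, so that the pre-Lie product is recovered inside $(S(P),\cdot,\#)$ as $a\rhd b=a\# b-a\cdot b$. Applying this inside $S(\Cal T^\Omega)=\Cal H^\Omega_<$, the smallest subspace containing the $\bullet_\omega$ and stable under $\cdot$ and $\#$ is automatically stable under $\rhd$, hence contains the pre-Lie subalgebra generated by the $\bullet_\omega$, which by the freeness statement above is all of $\Cal T^\Omega$; closing under the commutative product $\cdot$ then yields all of $S(\Cal T^\Omega)=\Cal H^\Omega_<$. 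Thus a bimorphism is determined by its values on the $\bullet_\omega$, and $\Cal F=\Cal F_{\bm a}$.

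I expect no genuine obstacle here: the two facts actually used — that $\#$ on $S(\Cal T^\Omega)$ is the Grossman--Larson product, and that $\overline\alpha$ respects both products — are supplied respectively by the discussion preceding the statement and by Corollary \ref{dipterous}. The one point deserving a line of care is the claim that the sub-pre-Lie-algebra generated by the $\bullet_\omega$ inside $(\Cal H^\Omega_<,\cdot,\#)$, built via $a\rhd b=a\# b-a\cdot b$, is exactly $\Cal T^\Omega$ and not a proper subalgebra; this is precisely the Chapoton--Livernet freeness result, so it goes through without difficulty.
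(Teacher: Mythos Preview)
Your proof is correct and follows essentially the same route as the paper's: use Chapoton--Livernet freeness of $\Cal T^\Omega$ to obtain the pre-Lie morphism $\alpha$, extend to $\overline\alpha\colon S(\Cal T^\Omega)\to S(A)$, and invoke Corollary \ref{dipterous} to get compatibility with both products. Your treatment of uniqueness via the identity $a\rhd b=a\#b-a\cdot b$ on degree-one elements is in fact more explicit than the paper's, which only spells out the construction.
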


\begin{proof}
By freeness property of the pre-Lie algebra $(\Cal T^\Omega,\to)$, the restriction of $\Cal F_{\bm a}$ to $\Cal T^\Omega$ can be defined as the unique pre-Lie algebra morphism from $\Cal T^\Omega$ into $A$ such that $\Cal F_{\bm a}(\bullet_\omega)=a_\omega$. We can then extend it multiplicatively (with respect to the commutative products $\cdot$ of both symmetric algebras) from $\Cal H^\Omega_<$ into $S(A)$. Corollary \ref{dipterous} ensures that $\Cal F_{\bm a}$ also respects the $\#$ products.
\end{proof}

\subsection{$B$-series and $S$-series}
\label{ssect:BSseries}

Let $\Omega$ be a set, let $A$ be a left pre-Lie algebra, let $\bm a=(a_\omega)_{\omega\in\Omega}$ be a collection of elements of $A$ indexed by $\Omega$, and let $\Cal F_{\bm a}:\Cal H^\Omega_<\to S(A)$ be the bi-morphism defined by Proposition \ref{dipterous-bis}. The collection $\big(\Cal F_{\bm a}(F)\big)$ is an arborescent comould in the sense of \cite{EV}. Any arborescent mould $M$ gives rise, by \textsl{arborescent mould-comould contraction}, to the \textsl{$S$-series} \cite{Mu}
\begin{equation}
	\sum_{F\, \Omega\smop{-decorated} \atop \smop{rooted forest}}\frac{M^F}{|\mop{Aut}F|}\Cal F_{\bm a}(F),
\end{equation}
which makes sense if $A$ is endowed with a suitable topology such that the sum above is convergent. The corresponding $B$-series \cite{HLW} is the restriction to rooted trees:
\begin{equation}
	\sum_{T\, \Omega\smop{-decorated} \atop \smop{rooted tree}}\frac{M^T}{|\mop{Aut}T|}\Cal F_{\bm a}(T),
\end{equation}
which belong to $A$. We will mostly look at the tautological case, when $A=\Cal T^\Omega$ and $\Cal F_{\bm a}=\mop{Id}$. In particular we define:
\begin{equation}
	S^M:=\sum_{F\, \Omega\smop{-decorated} \atop \smop{rooted forest}}\frac{M^F}{|\mop{Aut}F|}F,
\end{equation}
which makes sense in the completion of $\Cal H^\Omega_<$ for the grading defined by the number of vertices. The mould is obviously determined by its $S$-series. The Grossman--Larson product extends to the completion, and we have:
\begin{equation}\label{GL}
	S^{M\times N}=S^M\#S^N.
\end{equation}

\subsection{Product and composition of arborescent moulds}
\label{ssect:abomould}

The product is defined by dualizing the coproduct $\Delta$. It can be seen as the completion of the Grossman--Larson product of Paragraph \ref{sect:pre-lie} (see proof of Proposition \ref{mould-calculus-arbo} below). The analogue of the diamond composition is obtained by dualizing the internal coproduct $\Gamma$, namely:
\begin{equation}
(M\times N)^F=(M\otimes N)^{\Delta F},\hskip 15mm (M\diamond N)^F=(M\otimes N)^{\Gamma F},
\end{equation}
which yields:
\allowdisplaybreaks
\begin{eqnarray}
	(M\times N)^F 
	&=&\sum_{V_1\sqcup V_2=\Cal V(F) \atop V_1<V_2}M^{F\srestr{V_1}}N^{F\srestr{V_2}},\\
	(M\diamond N)^F 
	&=&\sum_{G\smop{ covering} \atop \smop{subforest of }F}M^{F/G}N^G. \label{arbo-diamond}
\end{eqnarray}
As a direct consequence of Theorems \ref{coprod-c} and \ref{arbo-interne}, and from the fact that the contracting arborification $\mathfrak a$ is a Hopf algebra morphism, the following holds:

\begin{thm}\label{arbo-assoc}
The product and the diamond composition of arborescent moulds are associative, and we moreover have for any ordinary moulds $M$ and $N$:
\begin{equation}
	(M\times N)_<=M_<\times N_<,\hskip 15mm (M\diamond N)_<=M_<\diamond N_<.
\end{equation}
\end{thm}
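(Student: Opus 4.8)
The plan is to obtain all four assertions by duality, transporting coalgebra identities already established on $\Cal H^\Omega_<$ to the level of linear forms (moulds), rather than attempting to lift associativity from ordinary to arborescent moulds: the latter strategy would only reach the arborescent moulds lying in the image of the injection $M\mapsto M\circ\mathfrak a$, which is a proper subspace of all arborescent moulds, and would therefore not suffice.

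First, associativity. The arborescent product $\times$ and the arborescent diamond composition $\diamond$ are by construction dual to the coproducts $\Delta$ of \eqref{delta} and $\Gamma$ of \eqref{extraction-contraction} on $\Cal H^\Omega_<$, via $(M\times N)^F=(M\otimes N)^{\Delta F}$ and $(M\diamond N)^F=(M\otimes N)^{\Gamma F}$; since a rooted forest has finitely many vertices these are finite sums, so dualization raises no convergence issue. For three arborescent moulds $M,N,P$ one computes $\big((M\times N)\times P\big)^F=(M\otimes N\otimes P)^{(\Delta\otimes\mop{Id})\Delta F}$ and $\big(M\times(N\times P)\big)^F=(M\otimes N\otimes P)^{(\mop{Id}\otimes\Delta)\Delta F}$, so associativity of $\times$ follows from coassociativity of $\Delta$, the classical Connes--Kreimer fact \cite{ConnesKreimer,Dur}; the same computation with $\Gamma$ in place of $\Delta$ gives associativity of $\diamond$ from the coassociativity of the extraction--contraction coproduct recalled in Paragraph \ref{ssect:coprodDecTrees} (the $\Omega$-decorated version of \cite{CEM}).

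Second, the two compatibility identities. The key input is that contracting arborification $\mathfrak a:\Cal H^\Omega_<\to\Cal H^\Omega$ is a Hopf algebra morphism, hence in particular a coalgebra morphism intertwining the Connes--Kreimer coproduct on forests with the deconcatenation coproduct on words, $\Delta\circ\mathfrak a=(\mathfrak a\otimes\mathfrak a)\circ\Delta$. Recalling that the ordinary mould product dualizes deconcatenation (Section \ref{sect:moules}) and that $(M\times N)_<=(M\times N)\circ\mathfrak a$ by definition, one has for any $\Omega$-decorated forest $F$
\begin{equation*}
(M\times N)_<^F=(M\times N)^{\mathfrak a F}=(M\otimes N)^{\Delta\,\mathfrak a F}=(M\otimes N)^{(\mathfrak a\otimes\mathfrak a)\Delta F}=(M\circ\mathfrak a\otimes N\circ\mathfrak a)^{\Delta F}=(M_<\times N_<)^F,
\end{equation*}
which is the first identity. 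For the second identity the same chain of equalities applies with the internal coproduct $\Gamma$ on $\Cal H^\Omega$ in place of deconcatenation (so that the ordinary diamond composition \eqref{mcbis} now dualizes it), with the extraction--contraction coproduct $\Gamma$ of \eqref{extraction-contraction} in place of $\Delta$ on forests, and with Theorem \ref{arbo-interne}, namely $\Gamma\circ\mathfrak a=(\mathfrak a\otimes\mathfrak a)\circ\Gamma$, in place of $\Delta\circ\mathfrak a=(\mathfrak a\otimes\mathfrak a)\circ\Delta$; since the arborescent diamond is defined by \eqref{arbo-diamond} as the dual of the forest coproduct $\Gamma$, one lands on $(M\diamond N)_<^F=(M_<\diamond N_<)^F$ for every $F$.

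Every ingredient is already in hand — coassociativity of $\Delta$ and of $\Gamma$ on $\Cal H^\Omega_<$, the fact that $\mathfrak a$ is a bialgebra morphism for the deconcatenation coproduct, and Theorem \ref{arbo-interne} for the internal one — so I do not expect any genuine obstacle; the statement is really a corollary of what precedes. The only point demanding a little care is the bookkeeping of the order of tensor factors and of the subforests $F|_{V_1}$, $F|_{V_2}$ in the pairing, so that the left/right placement of $M$ and $N$ stays consistent between $M\times N$ and $M_<\times N_<$ (and likewise for $\diamond$), together with the observation, already used above, that all the sums occurring remain finite so that duality may be applied freely.
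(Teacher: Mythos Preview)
Your argument is correct and is precisely the approach the paper takes: the theorem is stated there as a ``direct consequence'' of Theorem \ref{coprod-c}, Theorem \ref{arbo-interne}, and the fact that $\mathfrak a$ is a Hopf algebra morphism, without further details. You have simply unpacked that sentence --- dualizing the coassociativity of $\Delta$ and $\Gamma$ on $\Cal H^\Omega_<$ (the latter coming from the decorated version of \cite{CEM}, recalled in Paragraph~\ref{ssect:coprodDecTrees}) for the two associativity claims, and running the coalgebra-morphism identities $\Delta\circ\mathfrak a=(\mathfrak a\otimes\mathfrak a)\circ\Delta$ and $\Gamma\circ\mathfrak a=(\mathfrak a\otimes\mathfrak a)\circ\Gamma$ through the duality pairing for the two compatibility identities --- which is exactly what the paper intends.
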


The composition of arborescent moulds appears in \cite{EV95}, see Formula (11.48) therein. It also appears in E.~Vieillard-Baron's thesis (\cite[Paragraph 5.6]{VB}, see also \cite{VB15}). It is given for two arborescent moulds $M$ and $N$ by:
\begin{equation}\label{arbo-circ}
	(M\circ N)^F:=\sum_{\scriptstyle G\smop{ covering subforest}  \smop{ of } F,\atop \scriptstyle G=G_1\cdots G_r}M^{F/G}N^{G_1}\cdots N^{G_r},
\end{equation}
where $F$ is any $\Omega$-decorated forest, and where the $G_j$'s are the connected components of the covering subforest $G$. From \eqref{arbo-diamond} and \eqref{arbo-circ}, the two compositions $M\diamond N$ and $M\circ N$ coincide when $N$ is \textsl{separative}, i.e. when $N$ is a unital algebra morphism.

\begin{prop}\label{mould-calculus-arbo}
Both operations are associative, and composition distributes on the right over multiplication, namely:
$$
	(M\times M')\circ N=(M\circ N)\times(M'\circ N)
$$
for any triple of arborescent moulds $(M,M',N)$. The unit for the product is the mould $\varepsilon$ defined by $\varepsilon^\emptyset=1$ and $\varepsilon^{F}=0$ for any nontrivial $\Omega$-decorated rooted forest $F$. The mould $I_<$, defined by $I_<^{\bullet_\omega}=1$ for any $\omega \in \Omega$ and $I_<^{F}=0$ for $F=\emptyset$ or $F$ forest with at least two vertices, is a unit on the right.
\end{prop}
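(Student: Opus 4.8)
The plan is to transport the corresponding statement for ordinary moulds (Proposition \ref{mould-calculus}) through contracting arborification, using Theorem \ref{arbo-assoc} together with the facts that $\mathfrak a$ is a surjective Hopf algebra morphism and that the diamond and circ compositions agree when the right argument is separative. First I would handle associativity and the unit properties directly at the arborescent level, since these are the statements that do not obviously descend from the scalar case. Associativity of the product $\times$ on arborescent moulds is dual to coassociativity of $\Delta$ on $\Cal H^\Omega_<$, which holds since $(\Cal H^\Omega_<,\cdot,\Delta)$ is a Hopf algebra; equivalently, it is the Grossman--Larson product under the identification \eqref{GL}, which is associative because it is the convolution product dual to a coassociative coproduct. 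Associativity of $\circ$ I would prove exactly as in Proposition \ref{mould-calculus}: reindex the iterated sum in \eqref{arbo-circ} over towers of covering subforests, i.e.\ pairs $(G,H)$ with $H$ a covering subforest of $G$ (equivalently, of $F$, refining $G$), and check that both $(M\circ M')\circ N$ and $M\circ(M'\circ N)$ produce the same sum indexed by such towers, with the decorations matching because shrinking a connected component adds up decorations in the commutative semigroup $\Omega$, an associative operation.

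For the unit properties: $\varepsilon^\emptyset=1$, $\varepsilon^F=0$ otherwise is the counit of $\Delta$, so $\varepsilon\times M=M\times\varepsilon=M$ is immediate from \eqref{arbo-diamond}-style bookkeeping on admissible cuts $(V_1,V_2)$ with one block empty. For $I_<$ being a right unit for $\circ$, I would plug $N=I_<$ into \eqref{arbo-circ}: the only surviving covering subforests $G$ are those all of whose connected components $G_j$ are single vertices (so $I_<^{G_j}=1$), i.e.\ $G$ is the totally disconnected forest on $\Cal V(F)$; for that $G$ the contracted forest $F/G$ is $F$ itself, so $(M\circ I_<)^F=M^F$. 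Conversely $I_<$ is \emph{not} a two-sided unit, matching the phrasing ``unit on the right''. (One should note $I_<$ as defined here is the arborification $I\circ\mathfrak a$ of the ordinary mould $I$, which is consistent with the notation, though this identification is not needed for the proof.)

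The remaining, and I expect main, point is the distributivity identity $(M\times M')\circ N=(M\circ N)\times(M'\circ N)$. Here I would argue as in the ``continued'' part of the proof of Proposition \ref{mould-calculus}, now using the Grossman--Larson / $S$-series picture. Write $S^{M\circ N}$ for the $S$-series and recall from \eqref{GL} that $S^{M\times M'}=S^M\# S^{M'}$. The composition $\circ$ of arborescent moulds should be realizable as ``substitution'': there is an algebra endomorphism of the completed $\Cal H^\Omega_<$ (for the Grossman--Larson product) attached to $N$, sending $S^M\mapsto S^{M\circ N}$, in analogy with $\jmath^N$ in the scalar case. Granting that $M\mapsto S^{M\circ N}$ is a $\#$-algebra homomorphism, the identity follows: $S^{(M\times M')\circ N}$ is the image of $S^M\# S^{M'}$, hence equals $S^{M\circ N}\# S^{M'\circ N}=S^{(M\circ N)\times(M'\circ N)}$, and an $S$-series determines its mould. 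The hard part will be verifying that $M\mapsto S^{M\circ N}$ is indeed multiplicative for $\#$; the cleanest route is probably to dualize, showing that the coproduct $\Delta$ is a morphism of $\Gamma'$-comodules in the appropriate sense for the \emph{non-contracted} (circ-type) structure—but since \eqref{arbo-circ} differs from the diamond $\Gamma$ only in that the right factor is read multiplicatively rather than via $\,\qshu$, one can also give a direct combinatorial proof: expand both sides over covering subforests, and observe that choosing a covering subforest of $F$ together with a compatible admissible cut is the same as independently choosing admissible cuts on $F/G$ and decompositions of each connected component $G_j$ — the bijection is essentially the one in Lemma \ref{ab} specialized to the multiplicative (rather than quasi-shuffle) reading, and no new contractions interfere because cutting and shrinking act on disjoint parts of the combinatorial data. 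Once that bijection is in place, the decoration identities $F^{\tau\circ\sigma}=(F/G)^\alpha$-type equalities carry over verbatim and the two sides match forest by forest.
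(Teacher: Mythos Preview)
Your opening ``plan'' to transport Proposition~\ref{mould-calculus} through contracting arborification cannot work and is not, in fact, what you end up doing. Two obstructions: first, Theorem~\ref{arbo-assoc} concerns $\diamond$, not $\circ$, and the last Remark of Section~\ref{arborification} shows that $(M\circ N)_< = M_<\circ N_<$ fails unless $N$ is symmetrel; second, even for $\diamond$, the map $M\mapsto M_<$ is injective (dual to the surjection $\mathfrak a$), so arborified moulds form a proper subspace of arborescent moulds, and identities proved only for the former say nothing about the latter. Drop that framing.

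Your $S$-series route for distributivity is a known dead end: the paper's Remark immediately following the proposition says explicitly that no proof parallel to that of Proposition~\ref{mould-calculus} seems available, i.e.\ one cannot straightforwardly realize $M\mapsto S^{M\circ N}$ as a $\#$-algebra morphism the way $\jmath^N$ works for word series. You correctly flag this as ``the hard part,'' but you should not expect it to go through.

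What remains --- your direct combinatorial arguments --- is exactly what the paper does, and your treatment of the units and of associativity of $\circ$ (via towers $H\subseteq G\subseteq F$ of covering subforests) is correct and matches the paper. However, your description of the bijection underlying distributivity is garbled, and the appeal to Lemma~\ref{ab} is misplaced (that lemma serves Theorem~\ref{arbo-interne}, not this proposition). The correct bijection, which the paper writes out, is between
\[
\Big\{\big((W_1,W_2),\,G',\,G''\big):\ (W_1,W_2)\text{ admissible cut of }F,\ G'\subseteq F\restr{W_1},\ G''\subseteq F\restr{W_2}\Big\}
\]
and
\[
\Big\{\big(G,\,(V_1,V_2)\big):\ G\text{ covering subforest of }F,\ (V_1,V_2)\text{ admissible cut of }F/G\Big\},
\]
sending $(W_1,W_2,G',G'')$ to $G:=G'\sqcup G''$ together with the image cut $(V_1,V_2)$ in $F/G$; the key identity is $(F/G)\restr{V_i}=F\restr{W_i}/(G\cap F\restr{W_i})$. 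There is no ``decomposition of each connected component $G_j$'' on either side. Once you write this bijection down, the computation is the paper's verbatim.
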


\begin{proof}
The unit properties for $\varepsilon$ and $I_<$ are immediate. Note that $I_<$ cannot be a unit on the left. Indeed, for $\omega_1,\omega_2\in\Omega$ we have:
$$
	(I_<\circ N)^{\bullet_{\omega_1}\bullet_{\omega_2}}
	=I_<^{\bullet_{\omega_1}\bullet_{\omega_2}}N^{\bullet_{\omega_1}}N^{\bullet_{\omega_2}}=0,
$$
which differs in general from $N^{\bullet_{\omega_1}\bullet_{\omega_2}}$. The associativity of the arborescent mould product directly comes from \eqref{GL}. Associativity of arborescent composition can be checked directly: here $F\subseteq G$ means that $G$ is a covering subforest of $F$, and $G=G_1\cdots G_r$ means that the $G_j$'s are the connected components of $G$. For three arborescent moulds $M,N,P$ and for any decorated forest $F$ we have:
\allowdisplaybreaks
\begin{eqnarray*}
	\big((M\circ N)\circ P\big)^F
	&=&\sum_{r\ge 1}\sum_{G\subseteq F \atop G=G_1\cdots G_r}(M\circ N)^{F/G}P^{G_1}\cdots P^{G_r}\\
	&=&\sum_{r\ge s\ge 1}\sum_{{\scriptstyle G\subseteq F,\,G=G_1\cdots G_r\atop \scriptstyle \wt H\subseteq F/G,\, \wt H=\wt H_1\cdots \wt H_s}}
M^{F/G{\textstyle /} \wt H}N^{\wt H_1}\cdots N^{\wt H_s}P^{G_1}\cdots P^{G_r}\\
	&=&\sum_{r\ge s\ge 1}\sum_{H\subseteq G\subseteq F \atop {H=H_1\cdots H_s,\,G=G_1\cdots G_r}}
M^{F/H}N^{H_1/G\cap H_1}\cdots N^{H_s/G\cap H_s}P^{G_1}\cdots P^{G_r}\\
	&=&\sum_{s\ge 1}\sum_{H\subseteq F \atop H=H_1\cdots H_s}M^{F/H}(N\circ P)^{H_1}\cdots (N\circ P)^{H_s}\\
	&=&\big(M\circ(N\circ P)\big)^F.
\end{eqnarray*}
The distributivity property is also checked by a direct computation:
\allowdisplaybreaks
\begin{eqnarray*}
	\big((M\circ N)\times(M'\circ N)\big)^F
	&=&\sum_{W_1\sqcup W_2=\Cal V(F) \atop W_1<W_2}(M\circ N)^{F\srestr{W_1}}(M'\circ N)^{F\srestr{W_2}}\\
	&&\hskip -35mm =\sum_{W_1\sqcup W_2=\Cal V(F) \atop W_1<W_2}\ \sum_{s+t\ge 1}
	\sum_{{\scriptstyle G\subseteq F\srestr{W_1}, H\subseteq F\srestr{W_2}\atop\scriptstyle G=G_1\cdots G_s,\, H=H_1\cdots H_t}}
	M^{F\srestr{W_1}{\textstyle /} G}{M'}^{F\srestr{W_2}{\textstyle /} H}N^{G_1}\cdots N^{G_s}N^{H_1}\cdots N^{H_t}\\
	&&\hskip -35mm =\sum_{W_1\sqcup W_2=\Cal V(F) \atop W_1<W_2}\ 
	\sum_{r\ge 1}\sum_{{\scriptstyle G\subseteq F,\, G=G_1\cdots G_r\atop\scriptstyle G_j\subseteq F\srestr{W_1} \smop{ or } G_j\subseteq F\srestr{W_2}}}
	M^{F\srestr{W_1}{\textstyle /} G\cap F\srestr{W_1}}{M'}^{F\srestr{W_2}{\textstyle /} G\cap F\srestr{W_2}}N^{G_1}\cdots N^{G_r}\\
	&=&\sum_{r\ge 1}\ \sum_{G\subseteq F \atop G=G_1\cdots G_r}
	\sum_{V_1\sqcup V_2=\Cal V(F/G) \atop V_1<V_2}M^{(F/G)\srestr{V_1}}{M'}^{(F/G)\srestr{V_2}}N^{G_1}\cdots N^{G_r}\\
	&=&\sum_{r\ge 1}\ \sum_{G\subseteq F \atop G=G_1\cdots G_r} (M\times M')^{F/G}N^{G_1}\cdots N^{G_r}\\
	&=&\big((M\times M')\circ N\big)^F.
\end{eqnarray*}
\end{proof}

\begin{rmk}\rm
There seems to be no available proof of Proposition \ref{mould-calculus-arbo} parallel to the one of Proposition \ref{mould-calculus}. In other words, associativity of the arborescent composition $\circ$ cannot be directly derived from the associativity of $B$-series substitution \cite{CHV}, although the two results are closely related.
\end{rmk}
\begin{rmk}\rm
Composition of ordinary moulds does not correspond to composition of arborescent moulds via contracting arborification: for example, an easy computation gives:
$$(M\circ N)_<^{\bullet_{\omega_1}\bullet_{\omega_2}}-(M_<\circ N_<)^{\bullet_{\omega_1}\bullet_{\omega_2}}
=M^{[\omega_1+\omega_2]}(N^{\omega_1\sqshu\omega_2}-N^{\omega_1}N^{\omega_2}).$$
By Theorem \ref{arbo-assoc}, the identity $(M\circ N)_<=M_<\circ N_<$ however holds when $N$ is symmetrel. An explicit formula for $(M\circ N)_<$ is given in \cite{Me06}, see Equation (5.8) therein.
\end{rmk}

Certainly many more interesting results are at reach in various contexts by
properly using in combination a first coproduct and a second internal one transposed from the operation of mould composition (and the present
text illustrates that such transpositions are possible, with some care). A quite striking achievement in these respects, is the one recently obtained
by Lo\"\i c Foissy: the chromatic polynomial of a graph is characterized as the
only polynomial that is compatible by two biagebras in interaction, which, in this situation, correspond to the two coproducts considered here (see \cite{F16} and the bibliography therein).



\begin{thebibliography}{abcdsfgh}

\bibitem{AFM15}
M.~Aguiar, W.~Ferrer Santos, W.~Moreira,
\textsl{The Heisenberg product: from Hopf algebras and species to symmetric functions}
arXiv:1504.06315 (2015).

\bibitem{AM10}
M.~Aguiar, S.~Mahajan,
Monoidal functors, species and Hopf algebras,
CRM Monograph Series \textbf{29}, Amer.~Math.~Soc.~(2010).

\bibitem{BZ09}
N.~Bergeron, M.~Zabrocki,
\textsl{The Hopf algebras of symmetric functions and quasi-symmetric functions in non-commutative variables are free and cofree}, 
J.~Algebra and its Appl.~\textbf{9}, No 4, 581--600 (2009).

\bibitem{BS}
G.~Bogfjellmo, A.~Schmeding,
\textsl{The tame Butcher group},
J.~Lie Theory \textbf{26}, No 4, 1107--1144 (2016).

\bibitem{Br}
Ch.~Brouder, 
{\textsl{Runge-Kutta methods and renormalization\/}},
Eur.~Phys.~J.~C Part.~Fields {\bf{12}}, 512--534 (2000).
	
\bibitem{B00}
K.~Brown,
\textsl{Semigroups, rings, and Markov chains},
J.~Theoret.~Probab.~\textbf{13}, No 3, 871--938 (2000).
	
\bibitem{B15}
Y.~Bruned,
\textsl{Equations singuli\`eres de type KPZ},
PhD Thesis, Univ.~Paris 6, Dec.~2015.

\bibitem{Butcher1}
J.~C.~Butcher,
{\textsl{An algebraic theory of integration methods}}, 
 Math.~Comp.~{\bf{26}}, 79--106 (1972).
	 
\bibitem{CEM}
D.~Calaque, K.~Ebrahimi-Fard, D.~Manchon,
\textsl{Two interacting Hopf algebras of trees: A Hopf-algebraic approach to composition and substitution of B-series}, 
Adv.~Appl.~Math.~\textbf{47}, No 2, 282--308 (2011).
	 	

\bibitem{ChaLiv}
F.~Chapoton, M.~Livernet,
{\textsl{Pre-Lie algebras and the rooted trees operad}},
Internat.~Math.~Res.~Notices 2001, 395--408 (2001).
	 
\bibitem{CHV}
Ph.~Chartier, E.~Hairer, G.~Vilmart, 
{\textsl{Numerical integrators based on modified differential equations}}, 
Math.~Comp.~{\bf{76}}, 1941--1953 (2007).
	 
\bibitem{ConnesKreimer} 
A.~Connes, D.~Kreimer, 
{\textsl{Hopf Algebras, Renormalization and Noncommutative Geometry}}, 
Comm.~Math.~Phys.~{\bf 199}, 203--242 (1998).

\bibitem{C05}
J.~Cresson,
\textsl{Calcul moulien},
\texttt{arXiv:math/0509548} (2005).
	
\bibitem{Dur}
A.~D\" ur,
\textsl{M\"obius functions, incidence algebras and power series representations},
Lect.~Notes Math.~\textbf{1202}, Springer (1986).

\bibitem{E1}
J.~Ecalle,
\textsl{Les fonctions r\'esurgentes} Vol.~1, Publications Math\'ematiques d'Orsay (1981). 
Available at \url{http://portail.mathdoc.fr/PMO/feuilleter.php?id=PMO\_ 1981}.
	
	
	
\bibitem{E92}
J.~Ecalle,
\textsl{Singularit\'es non abordables par la g\'eom\'etrie},
Ann.~Inst.~Fourier \textbf{42}, No 1-2, 73--164 (1992).

\bibitem{EV95}
J.~Ecalle, B.~Vallet,
\textsl{Prenormalization, correction, and linearization of resonant vector fields or diffeomorphisms},
Prepub.~Math.~Orsay \textbf{95-32}, 90 pp. (1995).

\bibitem{E02}
J. Ecalle,
\textsl{Twisted resurgence monomials and canonical-spherical synthesis of local objects},
Proc.~of the ICMS conference \textsl{Analysable functions}, Edinburgh, World Scientific, June 2002.
	
\bibitem{EV}
J.~Ecalle, B.~Vallet,
\textsl{The arborification-coarborification transform: analytic, combinatorial, and algebraic aspects},
Ann.~Fac.~Sci.~Toulouse \textbf{XIII}, No 4, 575--657 (2004).
	
\bibitem{FFM15}
F.~Fauvet, L.~Foissy, D.~Manchon,
\textsl{The Hopf algebra of finite topologies and mould composition},
\texttt{arXiv:1503.03820} (2015).

\bibitem{FM}
F.~Fauvet, F.~Menous,
\textsl{Ecalle's arborification-coarborification transforms and the Connes--Kreimer Hopf algebra}, 
Ann.~Sci.~\'Ec.~Norm.~Sup.~\textbf{50}, No1, 39--83 (2017).
	
\bibitem{F}
L.~Foissy,
\textsl{Les alg\`ebres de Hopf des arbres enracin\'es d\'ecor\'es I,II},
Bull.~Sci.~Math.~\textbf{126}, 193--239 \& 249--288 (2002).

\bibitem{F16}
L.~Foissy,
\textsl{Chromatic polynomials and bialgebras of graphs},
\texttt{arXiv:1611.04303} (2016).

\bibitem{GL}
R.~Grossman, R.~G.~Larson,
{\textsl{Hopf-algebraic structure of families of trees}}, 
J.~of Algebra {\bf{126}}, 184--210 (1989).
	
\bibitem{HLW}
E.~Hairer, C.~Lubich, G.~Wanner, 
Geometric numerical integration Structure-preserving algorithms for ordinary differential equations, 
Vol.~{\bf{31}}, Springer Series in Computational Mathematics. Springer-Verlag, Berlin, 2002.
	
\bibitem{H2}
M.~E.~Hoffman,
\textsl{Quasi-shuffle products},
J.~Algebraic Combin.~\textbf{11}, 49--68 (2000).

\bibitem{KLT97}
D.~Krob, B.~Leclerc, J.-Y.~Thibon, 
\textsl{Noncommutative symmetric functions II: Transformations of alphabets}, 
International Journal of Algebra and Computation \textbf{7}, 181--264 (1997).

\bibitem{M10}
D. Manchon,
\textsl{Algebraic background for numerical methods, control theory and renormalization},
Combinatorics and Control conference, Benasque (2010).
\texttt{arXiv:math/1501:072015}.
	
\bibitem{M11}
D.~Manchon, 
\textsl{On bialgebra and Hopf algebra of oriented graphs}, 
Confluentes Math.~{\bf{4}}, No 1, (2012).

\bibitem{Me06}
F.~Menous,
\textsl{An example of local analytic $q$-difference equation: analytic classification},
Ann.~Fac.~Sci.~Toulouse \textbf{XV} No 4, 773--814 (2006).
	
\bibitem{M77}
R.~K.~Molnar,
\textsl{Semi-direct products of Hopf algebras},
J.~Algebra \textbf{45}, 29--51 (1977).
	
\bibitem{Mu}
A.~Murua, 
\textsl{The Hopf algebra of rooted trees, free Lie algebras, and Lie series}, 
Found.~Computational Math.~\textbf{6}, 387--426 (2006).
	
\bibitem{MS}
A.~Murua, J.~M.~Sanz-Serna,
\textsl{Word series for dynamical systems and their numerical integrators},
\texttt{arXiv:1502.05528 [math.NA]} (2015).

\bibitem{NPT13}
J.-C.~Novelli, F.~Patras, J.-Y. Thibon,
\textsl{Natural endomorphisms of quasi-shuffle Hopf algebras},
Bull.~Soc.~Math.~France \textbf{141} (1), 107--130 (2013). 

\bibitem{GO}
J.-M.~Oudom, D.~Guin, 
\textsl{Sur l'alg\`ebre enveloppante d'une alg\`ebre pr\'e-Lie}, 
C.~R.~Acad.~Sci.~Paris, Ser.~I \textbf{340}, 331--336 (2005).

\bibitem{OudomGuin08}		
J.-M.~Oudom, D.~Guin,
{\it{On the Lie enveloping algebra of a pre-Lie algebra}},
Journal of K-theory: K-theory and its Applications to Algebra, Geometry, and Topology {\bf{2}}, No 1, 147--167 (2008). 

\bibitem{radford}
D.~E.~Radford,
Hopf algebras,
World Scientific Publishing, Singapore, 2011.

\bibitem{reutenauer}
C.~Reutenauer,
Free Lie algebras,
Oxford University Press, New York, 1993.

\bibitem{VB}
E.~Vieillard-Baron,
\textsl{From resurgent functions to real r\`esummation through combinatorial Hopf algebras},
PhD Thesis, Universit\'e de Bourgogne (2014).

\bibitem{VB15}
E.~Vieillard-Baron,
\textsl{Simple and contracting arborification},
in \textsl{Fa\`a di Bruno Hopf Algebras, Dyson--Schwinger Equations and Lie--Butcher Series},
IRMA Lectures in Mathematics and Theoretical Physics \textbf{21}, 
K.~Ebrahimi-Fard and F.~Fauvet Eds., Europ.~Math.~Soc., 265--354 (2015).

\end{thebibliography}
\end{document}